\renewcommand*{\backref}[1]{}
\renewcommand*{\backrefalt}[4]{%
    \ifcase #1 (Not cited.)%
    \or        (Cited on page~#2.)%
    \else      (Cited on pages~#2.)%
    \fi}
\def \N {\mathbb{N}}
\def \R {\mathbb{R}}
\def \e {\varepsilon}
\def \LL {\mathcal{L}}
\def \d {\mathrm{d}}
\theoremstyle{definition}
\newtheorem{definition}{Definition}[section]
\newtheorem{remark}[definition]{Remark}
\theoremstyle{plain}
\newtheorem{theorem}[definition]{Theorem}
\newtheorem{lemma}[definition]{Lemma}
\renewcommand{\leq}{\leqslant}
\renewcommand{\geq}{\geqslant}
\renewcommand{\ge}{\geqslant}
\renewcommand{\le}{\leqslant}
\numberwithin{equation}{section}
\begin{document}

 \begin{abstract} We study a critical problem for an operator of mixed order obtained by the superposition of a Laplacian with a fractional Laplacian.

In particular, we investigate the corresponding Sobolev inequality, detecting the optimal constant, which we show that is never achieved.

Moreover, we present an existence (and nonexistence) theory for the corresponding subcritical perturbation problem.
 \end{abstract}
 
 \title[A Brezis-Nirenberg result]{A Brezis-Nirenberg type result \\ for mixed local and nonlocal operators}
 
 \author[S.\,Biagi]{Stefano Biagi}
 \author[S.\,Dipierro]{Serena Dipierro}
 \author[E.\,Valdinoci]{Enrico Valdinoci}
 \author[E.\,Vecchi]{Eugenio Vecchi}
 
 \address[S.\,Biagi]{Dipartimento di Matematica
 \newline\indent Politecnico di Milano \newline\indent
 Via Bonardi 9, 20133 Milano, Italy}
 \email{stefano.biagi@polimi.it}
 
 \address[S.\,Dipierro]{Department of Mathematics and Statistics
 \newline\indent University of Western Australia \newline\indent
 35 Stirling Highway, WA 6009 Crawley, Australia}
 \email{serena.dipierro@uwa.edu.au}
 
 \address[E.\,Valdinoci]{Department of Mathematics and Statistics
 \newline\indent University of Western Australia \newline\indent
 35 Stirling Highway, WA 6009 Crawley, Australia}
 \email{enrico.valdinoci@uwa.edu.au}
 
 \address[E.\,Vecchi]{Dipartimento di Matematica
 \newline\indent Università di Bologna \newline\indent
 Piazza di Porta San Donato 5, 40126 Bologna, Italy}
 \email{eugenio.vecchi2@unibo.it}

\subjclass[2020]
{35B33, 35R11, 35A15, 35A16, 49R05}

\keywords{Operators of mixed order, Sobolev inequality, critical exponents, existence theory.}

\thanks{The authors are members of INdAM.
S. Dipierro and E. Valdinoci are members of AustMS.
S. Dipierro is supported by
the Australian Research Council DECRA DE180100957
``PDEs, free boundaries and applications''.
E. Valdinoci is supported by the Australian Laureate Fellowship
FL190100081
``Minimal surfaces, free boundaries and partial differential equations''. S. Biagi and E. Vecchi are supported by the PRIN 2022 project 2022R537CS \emph{$NO^3$ - Nodal Optimization, NOnlinear elliptic equations, NOnlocal geometric problems, with a focus on regularity}, founded by the European Union - Next Generation EU.}

 \date{\today}
 
 \maketitle
 
\section{Introduction} In this paper we are concerned with
elliptic operators of mixed local and nonlocal type,
in relation to the possible existence of positive solutions for critical problems
and in connection with possible optimizers of suitable Sobolev inequalities.

The investigation of operators of mixed order is a very topical subject of investigation,
arising naturally in several fields, for instance as the superposition of different types of stochastic processes
such as a classical random walk and a L\'evy flight, which has also interesting application
in the study of optimal animal foraging strategies, see~\cite{MPV13, PV18, DLPV, DV21}.

{F}rom the technical point of view, these operators offer quite relevant challenges caused by the combination
of nonlocal difficulties with the lack of invariance under scaling. The contemporary investigation
has specifically focused on several problems in the existence and regularity theory 
(see~\cite{JK05, BI08, BJK10, BCCI12, CKSV12, BMV, CDV22, BDVV22a, SV, AC21, GaKi, GaLi, DeFMin, GaUk, SUPR})
symmetry and classification results
(see~\cite{BDVV, BVDV21, BDVV23}), etc.

The stirring motivation
for the problems presented in this paper comes from the study of nonlinear problems with critical exponents,
as the ones suggested by the optimizers of the Sobolev inequality.
Roughly speaking (see Section~\ref{sec.Prel} for a formal definition of the
functional setting) the strategy adopted here is to consider a fractional exponent~$s\in(0,1)$,
an open set~$\Omega\subseteq\R^n$,
not necessarily bounded or connected, and all functions~$u:\R^n\to\R$ which vanish outside~$\Omega$,
accounting for a mixed type Sobolev inequality
of the sort
\begin{equation}\label{T57tud}
\mathcal{S}_{n,s}(\Omega)\,\|u\|_{L^{2^*}(\R^n)}^2 
    \leq \|\nabla u\|_{L^2(\R^n)}^2+\iint_{\R^{2n}}\frac{|u(x)-u(y)|^2}{|x-y|^{n+2s}}\,dx\,dy.
\end{equation}
Here above, the constant~$\mathcal{S}_{n,s}(\Omega)$ is taken to be the largest 
possible one for which
such an inequality holds true and, as usual, $n\ge3$ and~$2^*:=\frac{2n}{n-2}$.

We observe below that indeed~\eqref{T57tud} is satisfied by choosing the constant on the left hand side
to be (less than or) equal to the classical Sobolev constant
\begin{equation} \label{eq:bestSnloc} 
 \mathcal{S}_n := \frac{1}{n(n-2)\pi}\,\bigg(\frac{\Gamma(n)}{\Gamma(n/2)}\bigg)^{2/n},
 \end{equation}
since by the standard Sobolev inequality (in~$\Omega$, or even in~$\R^n$),
$$ \mathcal{S}_{n}\,\|u\|_{L^{2^*}(\R^n)}^2 
    \leq \|\nabla u\|_{L^2(\R^n)}^2
    \leq \|\nabla u\|_{L^2(\R^n)}^2
    +\iint_{\R^{2n}}\frac{|u(x)-u(y)|^2}{|x-y|^{n+2s}}\,dx\,dy.
$$
As a result, the largest possible constant in~\eqref{T57tud} certainly satisfies~$\mathcal{S}_{n,s}(\Omega)\ge
\mathcal{S}_n$. In principle, one may suspect that in fact a strict inequality occurs
(because, for instance,~$\mathcal{S}_n$ is independent on~$\Omega$, as well as on~$s$), but this is not the case,
according to the following result:

 \begin{theorem} \label{thm:SnsSn}
  Let~$s\in(0,1)$ and $\Omega\subseteq\R^n$ be an arbitrary open set. Then, we have
  \begin{equation} \label{eq:SnsequalSn}
 	\mathcal{S}_{n,s}(\Omega) = \mathcal{S}_n.  
  \end{equation}
 \end{theorem}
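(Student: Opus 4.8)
The plan is to prove the two inequalities $\mathcal{S}_{n,s}(\Omega) \ge \mathcal{S}_n$ and $\mathcal{S}_{n,s}(\Omega) \le \mathcal{S}_n$ separately. The first is already observed in the excerpt: the standard Sobolev inequality on $\R^n$ applied to functions vanishing outside $\Omega$, together with the nonnegativity of the Gagliardo seminorm, immediately yields $\mathcal{S}_n \le \mathcal{S}_{n,s}(\Omega)$. So the whole content is the reverse inequality, and the natural strategy is a concentration/rescaling argument: I would show that along a suitable family of test functions the Gagliardo term becomes negligible compared with the Dirichlet energy, so that the mixed Rayleigh quotient can be pushed down arbitrarily close to $\mathcal{S}_n$.

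Concretely, I would first fix any ball $B_R(x_0) \subseteq \Omega$ (which exists since $\Omega$ is open and nonempty — the trivial case $\Omega = \emptyset$ being vacuous), and recall that the classical Sobolev constant $\mathcal{S}_n$ is attained on $\R^n$ by the Aubin--Talenti bubbles $U(x) = (1+|x|^2)^{-(n-2)/2}$. Since these are not compactly supported, I would truncate: set $u_\e(x) := \eta(x)\, \e^{-(n-2)/2} U\big((x-x_0)/\e\big)$, where $\eta \in C_c^\infty(B_R(x_0))$ is a fixed cutoff equal to $1$ near $x_0$. Standard computations (as in Brezis--Nirenberg) give, as $\e \to 0^+$,
\begin{equation*}
\|\nabla u_\e\|_{L^2(\R^n)}^2 = \mathcal{S}_n^{n/2} + O(\e^{n-2}),
\qquad
\|u_\e\|_{L^{2^*}(\R^n)}^2 = \mathcal{S}_n^{n/2} + O(\e^{n}),
\end{equation*}
so that the purely local Rayleigh quotient of $u_\e$ converges to $\mathcal{S}_n$. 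The key new estimate is the control of the Gagliardo seminorm $[u_\e]_s^2 := \iint_{\R^{2n}} |u_\e(x)-u_\e(y)|^2 |x-y|^{-n-2s}\,dx\,dy$. By scaling, $[u_\e]_s^2 = \e^{n-2s}\,[u_1]_s^2$ where $u_1$ is the (fixed, bounded, compactly supported, Lipschitz) truncated bubble at scale $1$; hence $[u_1]_s^2 < \infty$ and $[u_\e]_s^2 = O(\e^{n-2s}) = o(1)$ since $n - 2s > 0$. Therefore
\begin{equation*}
\mathcal{S}_{n,s}(\Omega) \le \frac{\|\nabla u_\e\|_{L^2(\R^n)}^2 + [u_\e]_s^2}{\|u_\e\|_{L^{2^*}(\R^n)}^2}
= \frac{\mathcal{S}_n^{n/2} + O(\e^{n-2}) + O(\e^{n-2s})}{\mathcal{S}_n^{n/2} + O(\e^n)} \longrightarrow \mathcal{S}_n,
\end{equation*}
and letting $\e \to 0^+$ gives $\mathcal{S}_{n,s}(\Omega) \le \mathcal{S}_n$, which combined with the reverse inequality proves \eqref{eq:SnsequalSn}.

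The main obstacle, modest as it is, lies in justifying the scaling identity and finiteness for the fractional seminorm of the truncated bubble: one must check that the truncated bubble indeed lies in the relevant function space so that $[u_1]_s < \infty$, for which it suffices that $u_1$ is bounded, Lipschitz and compactly supported (then splitting the double integral into the region $|x-y| \le 1$, handled by the Lipschitz bound, and $|x-y| > 1$, handled by boundedness and compact support). The exact asymptotics of the local terms are classical and can be quoted; the only genuinely mixed-order point is the exponent bookkeeping, namely that the Gagliardo contribution scales like $\e^{n-2s}$ with $n - 2s > n - 2 > 0$, so it is of strictly lower order than (indeed vanishes faster than) the local Dirichlet energy correction and does not interfere with the limit. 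I would also remark that this argument is robust: it uses neither boundedness nor connectedness of $\Omega$, exactly as required by the statement, and it simultaneously shows (anticipating the nonexistence part of the abstract) that $\mathcal{S}_{n,s}(\Omega)$ cannot be attained, since any optimizer would force $[u]_s = 0$, impossible for a nonzero $u$.
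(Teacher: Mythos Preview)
Your overall strategy is sound, but the key step contains a genuine error: the claimed scaling identity $[u_\e]_s^2=\e^{\,n-2s}\,[u_1]_s^2$ is false. First, with the cutoff $\eta$ held fixed while the bubble concentrates, the family $u_\e$ is \emph{not} a rescaling of any single profile $u_1$, so no exact scaling identity can hold. Second, even for a genuinely rescaled family $v_\e(x)=\e^{-(n-2)/2}V((x-x_0)/\e)$ one computes $[v_\e]_s^2=\e^{\,2-2s}\,[V]_s^2$, not $\e^{\,n-2s}$. The conclusion happens to survive because $2-2s>0$, but the justification you give does not establish it; a correct bound on $[u_\e]_s^2$ for the bubble with a \emph{fixed} cutoff requires a careful decomposition of the double integral (this is carried out later in the paper, in the proof of Lemma~\ref{lem:Path}, and gives $O(\e^{2-2s})+O(\e^{n-2})$).

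The paper's own proof avoids all of this with a more elementary and more robust device: rather than concentrating the Aubin--Talenti bubble inside a fixed cutoff and invoking the Brezis--Nirenberg asymptotics, it takes an \emph{arbitrary} $u\in C_0^\infty(\R^n)$ with $\|u\|_{L^{2^*}}=1$ and rescales the \emph{entire} function, $u_k(x)=k^{(n-2)/2}u(kx)$, so that for $k$ large the support lies in a ball contained in $\Omega$. Then $\|\nabla u_k\|_{L^2}^2=\|\nabla u\|_{L^2}^2$ exactly, while $[u_k]_s^2=k^{\,2s-2}\,[u]_s^2\to 0$; hence $\mathcal{S}_{n,s}(\Omega)\le\|\nabla u\|_{L^2}^2$ for every such $u$, and taking the infimum over $u$ gives $\mathcal{S}_{n,s}(\Omega)\le\mathcal{S}_n$. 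No bubble asymptotics are needed, and the scaling identity for the Gagliardo seminorm is exact and trivial.
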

 
 In our setting, Theorem~\ref{thm:SnsSn} plays a pivotal role, since
 it allows classical techniques to be efficiently adapted to the setting of operators with mixed order.
 
 A natural question related to this problem is whether or not the optimal constant in~\eqref{T57tud}
 is achieved, i.e. whether or not a minimizer\footnote{Strictly speaking,
 to formally address the problem of the existence of this minimizer, one should carefully define the functional
 space to which this object may in principle belong. This will be done in details in Section~\ref{sec.Prel}.
 Suffices here to say that one can, roughly speaking, just consider the natural closure of the space of smooth
 functions compactly supported in~$\Omega$.
 
 Further clarifications about this point will be highlighted in Remark~\ref{rem:Snasymptotic}.} exists. The next result answers this question.
 
 \begin{theorem} \label{thm:Snneverachieved}
  Let $\Omega\subseteq\R^n$ be an arbitrary open set. Then, the optimal constant~$\mathcal{S}_{n,s}(\Omega)$ in~\eqref{T57tud}
  is never achieved.
 \end{theorem}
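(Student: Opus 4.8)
The plan is to argue by contradiction: suppose $\mathcal{S}_{n,s}(\Omega)$ is achieved by some nonzero $u$ belonging to the relevant function space. By Theorem~\ref{thm:SnsSn} we know $\mathcal{S}_{n,s}(\Omega) = \mathcal{S}_n$, so the minimizer $u$ (which we may take nonnegative, after replacing $u$ by $|u|$, since this does not increase either term on the right-hand side of~\eqref{T57tud}) satisfies
\[
\mathcal{S}_n\,\|u\|_{L^{2^*}(\R^n)}^2 = \|\nabla u\|_{L^2(\R^n)}^2+\iint_{\R^{2n}}\frac{|u(x)-u(y)|^2}{|x-y|^{n+2s}}\,dx\,dy.
\]
The key observation is that the chain of inequalities used in the Introduction to establish $\mathcal{S}_{n,s}(\Omega)\ge \mathcal{S}_n$ must then be a chain of equalities. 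In particular,
\[
\mathcal{S}_n\,\|u\|_{L^{2^*}(\R^n)}^2 = \|\nabla u\|_{L^2(\R^n)}^2
\qquad\text{and}\qquad
\iint_{\R^{2n}}\frac{|u(x)-u(y)|^2}{|x-y|^{n+2s}}\,dx\,dy = 0.
\]

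First I would exploit the second equality. The vanishing of the Gagliardo seminorm forces $u(x)=u(y)$ for a.e.\ $(x,y)\in\R^{2n}$, hence $u$ is a.e.\ equal to a constant on $\R^n$. Since $u$ vanishes outside $\Omega$ (and $\Omega$ is a proper open subset of $\R^n$, or at worst $\Omega = \R^n$ in which case a nonzero constant is not in $L^{2^*}$), this constant must be $0$, so $u\equiv 0$, contradicting the assumption that $u$ is a nontrivial minimizer. A small point to address here is the degenerate possibility $\Omega = \R^n$: there the Gagliardo seminorm of a constant vanishes, but a nonzero constant cannot lie in $L^{2^*}(\R^n)$, so again $u\equiv 0$. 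Either way we reach a contradiction, and this already closes the argument.

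The main (and really the only) obstacle is making the first step — ``a minimizer $u$ exists, so the defining chain of inequalities becomes a chain of equalities'' — completely rigorous within the functional-analytic framework set up in Section~\ref{sec.Prel}: one must be sure that the space in which the infimum defining $\mathcal{S}_{n,s}(\Omega)$ is taken consists of functions for which both $\|\nabla u\|_{L^2(\R^n)}^2$ and the Gagliardo seminorm are finite, that $\|u\|_{L^{2^*}(\R^n)}<\infty$ (so that normalizing $\|u\|_{L^{2^*}}=1$ is legitimate), and that the standard Sobolev inequality $\mathcal{S}_n\|u\|_{L^{2^*}}^2 \le \|\nabla u\|_{L^2}^2$ genuinely applies to every such $u$ (this is exactly the inequality invoked in the Introduction). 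Granting these facts — all of which are part of the setup — the proof is as above: equality in the Sobolev-type estimate forces the nonlocal term to vanish, and that is incompatible with $u\not\equiv 0$ vanishing outside $\Omega$. I would also remark, in passing, that it is \emph{not} necessary to know whether the classical Sobolev inequality $\mathcal{S}_n\|u\|_{L^{2^*}}^2 \le \|\nabla u\|_{L^2}^2$ is itself achieved on $\Omega$ (for bounded $\Omega$ it is not, by the Aubin--Talenti analysis); the nonlocal term alone already provides the obstruction.
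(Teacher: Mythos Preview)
Your argument is correct and is essentially the same as the paper's: assume a minimizer $u_0\in\mathcal{X}^{1,2}(\Omega)$ with $\|u_0\|_{L^{2^*}}=1$ and $\rho(u_0)^2=\mathcal{S}_n$, use the classical Sobolev inequality to force $[u_0]_s=0$, and conclude that $u_0$ is constant, contradicting $\|u_0\|_{L^{2^*}}=1$. The paper's proof is slightly terser (it does not bother replacing $u$ by $|u|$, and it handles the bounded and unbounded cases uniformly via the single observation that a nonzero constant cannot lie in $L^{2^*}(\R^n)$), but the logic is identical.
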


As customary, the study of possible optimizers for inequalities of Sobolev type is intimately connected with
the possible existence of nontrivial solutions for critical problems like
\begin{equation*}
  \begin{cases}
  \LL u =  u^{2^*-1} & \text{in $\Omega$}, \\
  u \gneqq 0 & \text{in $\Omega$}, \\
  u \equiv 0 & \text{in $\R^n\setminus\Omega$},
  \end{cases}
 \end{equation*}
 where $\LL$ denotes (here and throughout the paper) the mixed local-nonlocal operator
 $$\LL u = -\Delta u+(-\Delta)^s u.$$
As for the classical case (i.e. $\LL = -\Delta$), when $\Omega$ is star-shaped, the above problem 
does not admit positive solutions, see \cite[Theorem 1.3]{RSPoho} thanks to some proper variants of the Pohozaev identity. This fact naturally drive us to study critical problem of the form
 \begin{equation} \label{eq:mainPBBN}
  \begin{cases}
  \LL u = u^{2^*-1}+\lambda u^p & \text{in $\Omega$}, \\
  u \gneqq 0 & \text{in $\Omega$}, \\
  u \equiv 0 & \text{in $\R^n\setminus\Omega$},
  \end{cases}
 \end{equation}
 where~$p\in[1,2^*-1)$ and~$\lambda\in\R$.
 We remind that when, $\LL=-\Delta$, problem
 \eqref{eq:mainPBBN} with $p = 1$
 is the famous Brezis-Nirenberg problem \cite{BrNir}, and it has already been studied for the fractional Laplacian as well (see~\cite{SerValTAMS}). 
 The case of nonlinear operators of fractional type has also been considered (see~\cite{MPSY16}).
 
 Our first result is that there do not exist
 solutions to this problem when $\lambda\leq 0$, at least on bounded and star-shaped domains. 
 
 \begin{theorem} \label{thm:nonexistence} Let~$\lambda\leq 0$.
  Assume that $\Omega\subseteq\R^n$ is bounded and star-shaped. 
  Then, there do not exist solutions to problem \eqref{eq:mainPBBN},
  whatever the exponent $p\in[1,2^*-1)$.
 \end{theorem}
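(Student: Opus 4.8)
The plan is to combine a Rellich--Pohozaev identity for the mixed operator~$\LL=-\Delta+(-\Delta)^s$ with the standard energy identity, and then to exploit the star-shapedness of~$\Omega$ together with the sign hypothesis~$\lambda\leq 0$. Set~$f(t):=t^{2^*-1}+\lambda t^p$ and, for~$t\geq 0$, $F(t):=\int_0^t f(\tau)\,d\tau=\frac{1}{2^*}\,t^{2^*}+\frac{\lambda}{p+1}\,t^{p+1}$. Testing~\eqref{eq:mainPBBN} against~$u$ yields the \emph{energy identity}
\begin{equation}\label{eq:planEN}
\|\nabla u\|_{L^2(\R^n)}^2+\iint_{\R^{2n}}\frac{|u(x)-u(y)|^2}{|x-y|^{n+2s}}\,dx\,dy=\int_{\Omega}u^{2^*}\,dx+\lambda\int_{\Omega}u^{p+1}\,dx.
\end{equation}
Testing instead against the Pohozaev multiplier~$x\cdot\nabla u$, the local term~$-\Delta u$ produces (after two integrations by parts, using~$u=0$ on~$\partial\Omega$) the classical contribution~$\frac{2-n}{2}\|\nabla u\|_{L^2(\R^n)}^2-\frac12\int_{\partial\Omega}|\partial_\nu u|^2\,(x\cdot\nu)\,d\sigma$; the nonlocal term~$(-\Delta)^s u$ produces, via the fractional Pohozaev identity of~\cite{RSPoho}, the bulk term~$-\frac{n-2s}{2}\iint_{\R^{2n}}\frac{|u(x)-u(y)|^2}{|x-y|^{n+2s}}\,dx\,dy$ plus a surface term proportional to~$\int_{\partial\Omega}\big(u/\delta^s\big)^2\,(x\cdot\nu)\,d\sigma$, where~$\delta(x):=\mathrm{dist}(x,\partial\Omega)$; and the right-hand side~$f(u)$ produces~$-n\int_{\Omega}F(u)\,dx$, since~$u$, and hence~$F(u)$, vanishes on~$\partial\Omega$. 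The crucial structural point is that, the leading part of~$\LL$ being~$-\Delta$, the solution~$u$ is smooth inside~$\Omega$ and regular up to~$\partial\Omega$ (Lipschitz, in fact~$C^{1,\alpha}$), so that~$|u(x)|\leq C\,\delta(x)$ near~$\partial\Omega$ and therefore~$u/\delta^s$ tends to~$0$ on~$\partial\Omega$ (recall~$s<1$): the nonlocal surface term thus vanishes. Collecting these contributions, one gets
\begin{equation}\label{eq:planPO}
\frac{n-2}{2}\,\|\nabla u\|_{L^2(\R^n)}^2+\frac{n-2s}{2}\iint_{\R^{2n}}\frac{|u(x)-u(y)|^2}{|x-y|^{n+2s}}\,dx\,dy-n\int_{\Omega}F(u)\,dx=-\frac12\int_{\partial\Omega}|\partial_\nu u|^2\,(x\cdot\nu)\,d\sigma.
\end{equation}

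Then I would combine~\eqref{eq:planEN} and~\eqref{eq:planPO}. Since~$\frac{n}{2^*}=\frac{n-2}{2}$, one has~$n\int_{\Omega}F(u)\,dx=\frac{n-2}{2}\int_{\Omega}u^{2^*}\,dx+\frac{n\lambda}{p+1}\int_{\Omega}u^{p+1}\,dx$; substituting~\eqref{eq:planEN} to remove~$\int_{\Omega}u^{2^*}\,dx$ from~\eqref{eq:planPO}, the~$\|\nabla u\|_{L^2(\R^n)}^2$ terms cancel and one is left with
\begin{equation}\label{eq:planFIN}
(1-s)\iint_{\R^{2n}}\frac{|u(x)-u(y)|^2}{|x-y|^{n+2s}}\,dx\,dy+\lambda\Big(\frac{n-2}{2}-\frac{n}{p+1}\Big)\int_{\Omega}u^{p+1}\,dx=-\frac12\int_{\partial\Omega}|\partial_\nu u|^2\,(x\cdot\nu)\,d\sigma.
\end{equation}
Because~$\Omega$ is star-shaped (say, with respect to the origin), $x\cdot\nu\geq 0$ on~$\partial\Omega$, so the right-hand side of~\eqref{eq:planFIN} is~$\leq 0$. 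On the left-hand side, $1-s>0$; moreover~$p<2^*-1$ forces~$p+1<2^*$, whence~$\frac{n}{p+1}>\frac{n}{2^*}=\frac{n-2}{2}$, so that~$\frac{n-2}{2}-\frac{n}{p+1}<0$ and hence~$\lambda\big(\frac{n-2}{2}-\frac{n}{p+1}\big)\geq 0$ because~$\lambda\leq 0$. Thus both summands on the left of~\eqref{eq:planFIN} are nonnegative while the right-hand side is nonpositive; this can only happen if
\begin{equation}\label{eq:planZERO}
\iint_{\R^{2n}}\frac{|u(x)-u(y)|^2}{|x-y|^{n+2s}}\,dx\,dy=0.
\end{equation}
But~\eqref{eq:planZERO} means that~$u$ is a.e.\ constant, and since~$u$ vanishes outside the bounded set~$\Omega$ we conclude~$u\equiv 0$, contradicting~$u\gneqq 0$. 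Hence~\eqref{eq:mainPBBN} has no solution.

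The main obstacle will be the rigorous derivation of~\eqref{eq:planPO}. On the one hand, one needs sufficient boundary regularity of~$u$ to perform the integrations by parts in the local part and to give a meaning to the normal derivative~$\partial_\nu u$ on~$\partial\Omega$; this is available from the regularity theory for~$\LL$, whose principal part is~$-\Delta$. On the other hand, one needs a careful localization and approximation argument to apply the fractional Pohozaev identity of~\cite{RSPoho} to~$(-\Delta)^s u=f(u)+\Delta u$ in~$\Omega$ and to verify that the boundary behaviour~$u/\delta^s\to 0$ indeed makes the corresponding surface term vanish --- which is precisely where the mixed (local plus nonlocal) structure of the problem enters. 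A further, and more routine, point is the finiteness of all the quantities involved, in particular the Gagliardo seminorm and~$\int_{\Omega}u^{p+1}\,dx$, which is ensured by~$u$ lying in the energy space and being bounded.
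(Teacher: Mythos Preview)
Your argument is correct and is essentially the content of \cite[Theorem~1.3]{RSPoho} specialised to the operator~$\LL=-\Delta+(-\Delta)^s$. The paper's proof, however, does not unpack this: it simply observes that any solution is bounded (by the $L^\infty$ estimate of \cite{BMV}), checks that the nonlinearity $f(t)=|t|^{2^*-2}t+\lambda|t|^{p-1}t$ is locally Lipschitz and satisfies the structural inequality
\[
\frac{n-2}{2}\,t f(t)\ \geq\ n\int_0^t f(\tau)\,d\tau\qquad(\lambda\leq 0,\ p+1\leq 2^*),
\]
and then invokes \cite[Theorem~1.3]{RSPoho} as a black box to conclude $u\equiv 0$.

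What your route buys is transparency: identity~\eqref{eq:planFIN} makes explicit that the nonlocal part contributes the strictly positive term $(1-s)[u]_s^2$, so nonexistence on star-shaped domains is in fact \emph{more robust} in the mixed case than in the purely local one (where the analogous left-hand side reduces to the $\lambda$-term alone). You also correctly identify the key technical point hidden inside \cite{RSPoho}: because the leading part of~$\LL$ is~$-\Delta$, solutions are Lipschitz up to~$\partial\Omega$, hence $u/\delta^s\to 0$ and the fractional boundary term in the Pohozaev identity vanishes. What the paper's route buys is brevity and robustness: all the delicate regularity and approximation arguments needed to justify~\eqref{eq:planPO} (which you rightly flag as ``the main obstacle'') are already carried out in \cite{RSPoho} for a general class of operators including~$\LL$, so there is no need to redo them.
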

To further analyze the existence theory for problem \eqref{eq:mainPBBN} in dependence of the parameter~$\lambda$, let us briefly recall the main strategy used in \cite{BrNir} in the case $p=1$. 

Due to the lack of compactness caused by the critical exponent, an idea borrowed from \cite{Aubin} consists in proving that
$$S(\lambda) := \inf \left\{  \|\nabla u\|^2_{L^2(\Omega)}- \lambda\|u\|^2_{L^2(\Omega)}:\,
\text{$u\in H^{1}_0(\Omega)$ and $\|u\|_{L^{2^*}(\Omega)} = 1$} \right\}$$
is achieved under some restrictions on $\lambda$; in order to do this, the key
step is to show that $S(\lambda) < \mathcal{S}_n$, where $\mathcal{S}_n$ is as in 
\eqref{eq:bestSnloc}.

Now, the \emph{strict inequality} $S(\lambda) < \mathcal{S}_n$ is obtained in \cite{BrNir}
via the following approach: first of all, taking into account that the
minimizers in the Sobolev inequality are given by Aubin-Talenti functions, one considers
the function
\begin{equation*}
u_\varepsilon = \dfrac{\phi}{(\varepsilon^2 + |x|^2)^{(n-2)/2}} \quad (\varepsilon>0).
\end{equation*} Then, using $u_\e$ as a \emph{competitor function}, one gets (at least for $n\geq 5$) that
$$\frac{\|\nabla u_\e\|^2_{L^2(\Omega)} - \lambda \|u_\e\|^2_{L^2(\Omega)}}
{\|u_\e\|^2_{L^{2^*}(\Omega)}} = \mathcal{S}_n - c\lambda\e^2+ O(\e^{n-2})\quad
\text{as $\e\to 0^+$},$$
where $c > 0$ is a suitable constant. From this, choosing $\e$ sufficiently small,
one immediately conclude that $S(\lambda) < \mathcal{S}_n$.
 A similar approach works for $1<p<2^* -1$ as well, and it 
 has also been used in the nonlocal framework \cite{SerValTAMS}.
 \medskip
 
 Differently from what one can expect, in our mixed setting the situation changes. The main reason is that, in  
 trying to repeat the above argument, one is led to consider the following
 minimization problem
 \begin{equation*}
 \begin{split}
 S(\lambda)
 & := \inf \left\{ \|\nabla u\|^2_{L^2(\Omega)}+[u]^2_s- \lambda\|u\|^2_{L^2(\Omega)}:\,
\text{$u\in H^{1}_0(\Omega)$ and $\|u\|_{L^{2^*}(\Omega)} = 1$} \right\}
 \end{split}
 \end{equation*}
 and to prove that
 \begin{equation} \label{eq:goalnonloc}
  S(\lambda) < \mathcal{S}_{n,s}(\Omega),
  \end{equation}
 where 
 $[u]_s$ denotes the usual Gagliardo seminorm of $u$ (here, we agree to identify $u$ with its
 zero-extension out of $\Omega$) and
 $\mathcal{S}_{n,s}(\Omega)$ is as in \eqref{T57tud}.
 We explicitly stress that the constant $S(\lambda)$
 (which is different from the one defined above) actually depends
 on $n,s$ and on $\Omega$; however, since these quantities are fixed,
 to simplify the notation we avoid to keep track of these dependencies in the sequel.
 
 Now, since we know from Theorem \ref{thm:SnsSn} that $\mathcal{S}_{n,s}(\Omega) = \mathcal{S}_n$,
 in order to prove \eqref{eq:goalnonloc}
 it is natural to consider as a competitor function
 \emph{the same function} $u_\e$ defined above; however, the presence
 of the nonlocal part $[u_\e]^2_s$ gives
 \begin{align*}
  & \frac{\|\nabla u_\e\|^2_{L^2(\Omega)} + [u_\e]^2_s - \lambda \|u_\e\|^2_{L^2(\Omega)}}
{\|u_\e\|^2_{L^{2^*}(\Omega)}} \\[0.1cm]
& \qquad\qquad = \mathcal{S}_n + O(\e^{2-2s}) - c\lambda\e^2+ O(\e^{n-2})\quad
\text{as $\e\to 0^+$}
\end{align*}
 (see, precisely, identity \eqref{eq:tociteIntro} in the proof of Lemma \ref{lem:Path}),
 and the term $O(\e^{2-2s})$ \emph{is not negligible} when $\e\to 0^+$. 
  \medskip
  
 All that being said, in the linear case $p = 1$
 we obtain that problem~\eqref{eq:mainPBBN} does not admit any solution both in the range of ``small'' and ``large'' values of~$\lambda$, but it does possess solutions for an ``intermediate'' regime of 
 values of~$\lambda$; more precisely, denoting by 
 $\lambda_{1,s}$ the first Dirichlet eigenvalue of $(-\Delta)^s$ in a bounded open set~$\Omega$,
 and by~$\lambda_1$ the first Dirichlet eigenvalue of $\LL$ in $\Omega$ (a precise summary of the related spectral
property being recalled on page~\pageref{eq:deflambda1s}), we 
have the following result.
 \begin{theorem} \label{thm:mainLinear} 
Let~$\Omega\subset\R^n$ be an open and bounded set and $p=1$.
  There exists some $\lambda^*\in[\lambda_{1,s},\lambda_1)$ 
  such that pro\-blem \eqref{eq:mainPBBN} possesses at least one
  solution if 
  $$\lambda^*<\lambda<\lambda_1.$$
  Moreover, the following facts hold:
  \begin{enumerate}
    \item there do not exist solutions to problem \eqref{eq:mainPBBN} if
 $\lambda \geq \lambda_1$;
  \item for every $0<\lambda\leq \lambda_{1,s}$
  there do no exist solutions to problem \eqref{eq:mainPBBN} belonging
  to the closed ball $\mathcal{B}\subseteq L^{2^*}(\R^n)$ defined as
  $$\mathcal{B} := \{u\in L^{2^*}(\R^n):\,\|u\|\leq \mathcal{S}_n^{\,(n-2)/4}\}.$$
  \end{enumerate}
 \end{theorem}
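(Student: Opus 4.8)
The plan is to reduce problem~\eqref{eq:mainPBBN} with $p=1$ to the constrained minimization problem defining $\mathcal{S}_{\lambda,s}$, and to play off $\mathcal{S}_{\lambda,s}$ against the Sobolev constant $\mathcal{S}_n=\mathcal{S}_{n,s}(\Omega)$ (Theorem~\ref{thm:SnsSn}). I would first record elementary properties of $\lambda\mapsto\mathcal{S}_{\lambda,s}$. For $\lambda<\lambda_1$ the form $u\mapsto\|\nabla u\|_{L^2}^2+[u]_s^2-\lambda\|u\|_{L^2}^2$ is comparable to $\|\nabla u\|_{L^2}^2+[u]_s^2$ on the energy space (since $\|\nabla u\|_{L^2}^2+[u]_s^2\ge\lambda_1\|u\|_{L^2}^2$), hence coercive, and $\mathcal{S}_{\lambda,s}\ge(1-\lambda/\lambda_1)\mathcal{S}_n>0$. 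As an infimum of affine functions of $\lambda$, $\mathcal{S}_{\lambda,s}$ is concave, hence continuous and nonincreasing for $\lambda<\lambda_1$; moreover $\mathcal{S}_{0,s}=\mathcal{S}_n$ by Theorem~\ref{thm:SnsSn}, while testing with a normalization of the first eigenfunction $\varphi_1$ of $\LL$ gives $\mathcal{S}_{\lambda,s}\le(\lambda_1-\lambda)\|\varphi_1\|_{L^2}^2/\|\varphi_1\|_{L^{2^*}}^2\to0$ as $\lambda\to\lambda_1^-$. Thus $\{\lambda\in(0,\lambda_1):\mathcal{S}_{\lambda,s}<\mathcal{S}_n\}$ is a nonempty subinterval, and I would set $\lambda^*:=\inf\{\lambda\in(0,\lambda_1):\mathcal{S}_{\lambda,s}<\mathcal{S}_n\}$, so that $\lambda^*<\lambda_1$ and, by monotonicity, $\mathcal{S}_{\lambda,s}<\mathcal{S}_n$ for every $\lambda\in(\lambda^*,\lambda_1)$.

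Next I would prove that for $\lambda\in(\lambda^*,\lambda_1)$ the infimum $\mathcal{S}_{\lambda,s}$ is attained. Given a minimizing sequence $(u_k)$ with $\|u_k\|_{L^{2^*}(\R^n)}=1$, coercivity makes it bounded, so up to a subsequence $u_k\rightharpoonup u$ in the energy space, $u_k\to u$ in $L^2(\Omega)$ (compact embedding, $\Omega$ bounded) and a.e.; put $w_k:=u_k-u\rightharpoonup0$. By the Brezis--Lieb lemma one has $\|u_k\|_{L^{2^*}}^{2^*}=\|u\|_{L^{2^*}}^{2^*}+\|w_k\|_{L^{2^*}}^{2^*}+o(1)$, while weak convergence gives $\|\nabla u_k\|_{L^2}^2=\|\nabla u\|_{L^2}^2+\|\nabla w_k\|_{L^2}^2+o(1)$ and $[u_k]_s^2=[u]_s^2+[w_k]_s^2+o(1)$. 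Using the definition of $\mathcal{S}_{\lambda,s}$ on $u$, discarding the nonnegative term $[w_k]_s^2$, and invoking the classical Sobolev inequality $\mathcal{S}_n\|w_k\|_{L^{2^*}}^2\le\|\nabla w_k\|_{L^2}^2$, one gets $\mathcal{S}_{\lambda,s}+o(1)\ge\mathcal{S}_{\lambda,s}\|u\|_{L^{2^*}}^2+\mathcal{S}_n\|w_k\|_{L^{2^*}}^2+o(1)$; since $2/2^*<1$ and $\|u\|_{L^{2^*}}^{2^*}+\lim_k\|w_k\|_{L^{2^*}}^{2^*}=1$, the strict inequality $\mathcal{S}_{\lambda,s}<\mathcal{S}_n$ forces $\|w_k\|_{L^{2^*}}\to0$, hence $\|u\|_{L^{2^*}}=1$, and by weak lower semicontinuity $u$ attains $\mathcal{S}_{\lambda,s}$. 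Replacing $u$ by $|u|$ (which does not increase the energy) one may assume $u\ge0$; writing the Euler--Lagrange equation and rescaling $v:=\mathcal{S}_{\lambda,s}^{(n-2)/4}u$ yields a nonnegative nontrivial weak solution of $\LL v=v^{2^*-1}+\lambda v$ in $\Omega$, $v\equiv0$ in $\R^n\setminus\Omega$, which the strong maximum principle for $\LL$ upgrades to $v\gneqq0$. Since $\|v\|_{L^{2^*}}=\mathcal{S}_{\lambda,s}^{(n-2)/4}\le\mathcal{S}_n^{(n-2)/4}$, we also have $v\in\mathcal{B}$. This gives the existence statement for $\lambda\in(\lambda^*,\lambda_1)$.

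For the nonexistence part, let $u$ solve~\eqref{eq:mainPBBN} with $p=1$. For item~(1) ($\lambda\ge\lambda_1$), I would test against the positive first eigenfunction $\varphi_1$ of $\LL$ and use symmetry of the associated bilinear form: $\int_\Omega u^{2^*-1}\varphi_1+\lambda\int_\Omega u\varphi_1=\langle\LL u,\varphi_1\rangle=\langle u,\LL\varphi_1\rangle=\lambda_1\int_\Omega u\varphi_1$, whence $(\lambda_1-\lambda)\int_\Omega u\varphi_1=\int_\Omega u^{2^*-1}\varphi_1>0$, impossible for $\lambda\ge\lambda_1$. For item~(2) ($0<\lambda\le\lambda_{1,s}$), I would test against $u$ itself: $\|\nabla u\|_{L^2}^2+[u]_s^2=\|u\|_{L^{2^*}}^{2^*}+\lambda\|u\|_{L^2}^2$; since $\lambda\le\lambda_{1,s}$ gives $\lambda\|u\|_{L^2}^2\le[u]_s^2$, this yields $\|\nabla u\|_{L^2}^2\le\|u\|_{L^{2^*}}^{2^*}$, and combining with $\mathcal{S}_n\|u\|_{L^{2^*}}^2\le\|\nabla u\|_{L^2}^2$ (with $u\ne0$) forces $\|u\|_{L^{2^*}}\ge\mathcal{S}_n^{(n-2)/4}$. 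If moreover $u\in\mathcal{B}$, this is an equality and $u$ would be an extremal of the classical Sobolev inequality supported in the bounded set $\Omega$ — impossible, the extremals being the (non-compactly supported) Aubin--Talenti functions; hence no solution in $\mathcal{B}$ when $0<\lambda\le\lambda_{1,s}$. Finally, $\lambda^*\ge\lambda_{1,s}$: if $\lambda^*<\lambda_{1,s}$, choosing $\lambda\in(\lambda^*,\min\{\lambda_{1,s},\lambda_1\})$ the previous paragraph would produce a solution of~\eqref{eq:mainPBBN} lying in $\mathcal{B}$, contradicting item~(2). Together with $\lambda^*<\lambda_1$, this shows $\lambda^*\in[\lambda_{1,s},\lambda_1)$.

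The main obstacle is the compactness step: one must run the Brezis--Lieb / concentration analysis in the mixed setting and check that the nonlocal Gagliardo term is genuinely inert in the loss of compactness — intuitively because it rescales like $\e^{2-2s}\to0$ under critical concentration, so that the only energy level to be beaten is $\mathcal{S}_n$ — thereby confirming that $\mathcal{S}_{\lambda,s}<\mathcal{S}_n$ is the correct threshold for attainment. The remaining ingredients, namely the spectral description of $\lambda_1$ and $\lambda_{1,s}$, the strong maximum principle for $\LL$, and the regularity needed to justify the test-function identities, belong to the functional framework recalled in Section~\ref{sec.Prel} and to the cited literature on $\LL$.
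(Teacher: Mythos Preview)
Your proposal is correct and follows essentially the same route as the paper: reduce to the constrained minimization for $\mathcal{S}_{\lambda,s}$, show that $\mathcal{S}_{\lambda,s}<\mathcal{S}_n$ guarantees attainment via a Brezis--Lieb argument (the paper cites \cite[Lemma~1.2]{BrNir} rather than spelling this out), rescale the minimizer to a solution, and handle nonexistence for $\lambda\ge\lambda_1$ by testing against the first eigenfunction and for $0<\lambda\le\lambda_{1,s}$ by testing the equation against $u$ itself and reducing to nonattainment of $\mathcal{S}_n$ on a bounded domain.

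Two small points of comparison worth noting. First, you obtain continuity of $\lambda\mapsto\mathcal{S}_{\lambda,s}$ from concavity (infimum of affine functions), which is slicker than the paper's direct $\varepsilon$--$\delta$ argument in Lemma~\ref{lem:contSnlambda}. Second, you deduce $\lambda^*\ge\lambda_{1,s}$ indirectly, by observing that the solution produced for $\lambda\in(\lambda^*,\lambda_1)$ satisfies $\|v\|_{L^{2^*}}=\mathcal{S}_{\lambda,s}^{(n-2)/4}\le\mathcal{S}_n^{(n-2)/4}$, hence lies in $\mathcal{B}$, and then invoking item~(2); the paper instead proves directly (Lemma~\ref{lem:SlambdaequalS}) that $\mathcal{S}_{\lambda,s}=\mathcal{S}_n$ for all $\lambda\le\lambda_{1,s}$ via the inequality $[u]_s^2\ge\lambda_{1,s}\|u\|_{L^2}^2$. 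Your indirect route is a neat repackaging, but note that it relies on the same inequality $\lambda\|u\|_{L^2}^2\le[u]_s^2$ that you already used in proving item~(2), so the underlying content is identical.
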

We believe that a similar result should hold for sublinear perturbations (i.e. $0<p<1$), 
 but we will come back to this in a future work.
 \medskip
 
 As regards the case of the \emph{superlinear perturbation}, instead,
 the situation
 is quite different: as a matter of fact, 
 we can adapt
 the variational argument in \cite{BCSS} (based
 on the Mountain Pass Theorem,) to prove the following result.

\begin{theorem}\label{MS:3232}
Let $n \geq 3$ and $p\in(1,2^*-1)$. Set\footnote{Notice that, by definition, we have $\kappa_{s,n},\,\beta_{p,n} > 0$.}
\begin{equation} \label{eq:defkappabeta}
 \kappa_{s,n} := \min\{2-2s,n-2\},\qquad \beta_{p,n} := n-\frac{(p+1)(n-2)}{2}
 \end{equation}
 
 Then, the following assertions hold.
 \begin{itemize}
  \item[(1)] If $\kappa_{s,n} > \beta_{p,n}$, 
  then problem \eqref{eq:mainPBBN} admits a solution for every~$\lambda>0$.
  \item[(2)] If $\kappa_{s,n}\leq \beta_{p,n}$, 
  then problem \eqref{eq:mainPBBN} admits a solution
for $\lambda$ large enough.
 \end{itemize}
\end{theorem}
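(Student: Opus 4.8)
The plan is to realise the solutions of~\eqref{eq:mainPBBN} as Mountain Pass critical points of the energy functional
\[
E_\lambda(u)=\frac12\Big(\|\nabla u\|_{L^2(\R^n)}^2+[u]_s^2\Big)-\frac{\lambda}{p+1}\|u_+\|_{L^{p+1}(\R^n)}^{p+1}-\frac{1}{2^*}\|u_+\|_{L^{2^*}(\R^n)}^{2^*},
\]
defined on the natural energy space $X_0(\Omega)$ of $\LL$, which for bounded $\Omega$ is $H^1_0(\Omega)$ equipped with a norm equivalent to the Dirichlet one (the Gagliardo seminorm being controlled by $\|\nabla\cdot\|_{L^2}$ on $H^1_0(\Omega)$). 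Working with the positive part $u_+:=\max\{u,0\}$ forces critical points to be nonnegative: testing $E_\lambda'(u)=0$ against $u_-:=\max\{-u,0\}$ annihilates the two nonlinear terms and leaves $\|\nabla u_-\|_{L^2}^2+[u_-]_s^2\le0$, whence $u_-\equiv0$. Since $p>1$ and $2^*>2$, $E_\lambda$ has the Mountain Pass geometry --- the quadratic part dominates near the origin by the Sobolev embedding, while $E_\lambda(tv)\to-\infty$ as $t\to+\infty$ for any fixed $v\ge0$, $v\not\equiv0$ --- so the Mountain Pass level
\[
c_\lambda:=\inf_{\gamma\in\Gamma}\max_{t\in[0,1]}E_\lambda(\gamma(t)),\qquad\Gamma:=\big\{\gamma\in C\big([0,1];X_0(\Omega)\big):\gamma(0)=0,\ E_\lambda(\gamma(1))<0\big\},
\]
is well defined and strictly positive.

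The first substantial step is the Palais-Smale analysis. A $(\mathrm{PS})_c$ sequence $(u_k)$ is bounded in $X_0(\Omega)$, hence $u_k\rightharpoonup u$ with $u$ a weak solution of~\eqref{eq:mainPBBN}; by Rellich, $u_k\to u$ strongly in $L^{p+1}(\Omega)$ (as $p+1<2^*$), so the subcritical term is continuous along the sequence, and a concentration-compactness argument of Brezis-Lieb type applied to $v_k:=u_k-u$ yields $\ell:=\lim\big(\|\nabla v_k\|_{L^2}^2+[v_k]_s^2\big)=\lim\|v_k\|_{L^{2^*}}^{2^*}$ together with $c=E_\lambda(u)+\tfrac1n\,\ell$. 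Since $\lambda>0$ and $p+1>2$, the functional is nonnegative on its critical points, so $E_\lambda(u)\ge0$ and $c\ge\tfrac1n\ell$; furthermore, the Sobolev inequality~\eqref{T57tud} applied to $v_k\in X_0(\Omega)$, combined with $\mathcal{S}_{n,s}(\Omega)=\mathcal{S}_n$ from Theorem~\ref{thm:SnsSn}, gives $\mathcal{S}_n\,\ell^{2/2^*}\le\ell$, whence $\ell=0$ or $\ell\ge\mathcal{S}_n^{\,n/2}$. Consequently $E_\lambda$ satisfies $(\mathrm{PS})_c$ for every $c<\tfrac1n\mathcal{S}_n^{\,n/2}$, and by the Mountain Pass Theorem the whole proof reduces to the strict inequality
\[
c_\lambda<\frac1n\,\mathcal{S}_n^{\,n/2}
\]
in each of the two regimes: this yields a nontrivial critical point of $E_\lambda$ which, being nonnegative and nonzero, solves~\eqref{eq:mainPBBN}.

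To bound $c_\lambda$ from above I would test along the ray through a truncated Aubin-Talenti bubble $u_\e=\phi\,U_\e$, where $U_\e$ is the standard instanton concentrating at a fixed point of $\Omega$ as $\e\to0^+$ and $\phi\in C_c^\infty(\Omega)$ is a cutoff equal to $1$ near that point, so that $c_\lambda\le\max_{t\ge0}E_\lambda(t\,u_\e)$. The asymptotics needed are: the mixed Rayleigh quotient $\big(\|\nabla u_\e\|_{L^2}^2+[u_\e]_s^2\big)\big/\|u_\e\|_{L^{2^*}}^2=\mathcal{S}_n+O(\e^{\kappa_{s,n}})$ --- here $\kappa_{s,n}=\min\{2-2s,n-2\}$ from~\eqref{eq:defkappabeta} is precisely the balance between the classical truncation error $O(\e^{n-2})$ of the gradient part and the contribution of the Gagliardo seminorm of a concentrating bubble, which by scaling vanishes at rate $\e^{2-2s}$ --- together with $\|u_\e\|_{L^{p+1}}^{p+1}\ge c_0\,\e^{\beta_{p,n}}$ for some $c_0>0$. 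Inserting these into the explicit maximisation of the scalar map $t\mapsto E_\lambda(t\,u_\e)$ gives
\[
\max_{t\ge0}E_\lambda(t\,u_\e)\le\frac1n\,\mathcal{S}_n^{\,n/2}+C_1\,\e^{\kappa_{s,n}}-C_2\,\lambda\,\e^{\beta_{p,n}}+o\big(\e^{\kappa_{s,n}}+\e^{\beta_{p,n}}\big),\qquad C_1,C_2>0.
\]
In case~(1) one has $\beta_{p,n}<\kappa_{s,n}$, so for every fixed $\lambda>0$ the negative term $-C_2\lambda\e^{\beta_{p,n}}$ dominates $C_1\e^{\kappa_{s,n}}$ once $\e$ is small enough; this gives $c_\lambda<\tfrac1n\mathcal{S}_n^{\,n/2}$, hence a solution for all $\lambda>0$. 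In case~(2) one has $\beta_{p,n}\ge\kappa_{s,n}$, and here no bubble is needed: for any fixed $v\ge0$, $v\not\equiv0$, one checks that $\max_{t\ge0}E_\lambda(t\,v)\to0^+$ as $\lambda\to+\infty$, so $c_\lambda<\tfrac1n\mathcal{S}_n^{\,n/2}$ --- and hence a solution exists --- for all sufficiently large $\lambda$.

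The main obstacle is the bubble bookkeeping behind the expansion of the mixed Rayleigh quotient: establishing the rate $O(\e^{2-2s})$ for the Gagliardo seminorm of the \emph{truncated} bubble (the untruncated instanton need not lie in $H^s(\R^n)$ when $n\le4$, so the cutoff is essential and its contribution must be controlled, and in the borderline cases a logarithmic factor may appear), and the lower bound $\|u_\e\|_{L^{p+1}}^{p+1}\ge c_0\,\e^{\beta_{p,n}}$, with special care in the low-dimensional regimes (notably $n=3$, or whenever $2-2s<n-2$) where the comparison of the exponents $\kappa_{s,n}$ and $\beta_{p,n}$ is the very heart of the dichotomy. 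The remaining ingredients --- the $(\mathrm{PS})$ threshold, the Mountain Pass geometry, and the sign of the critical point --- are by now standard and transfer from~\cite{BCSS} once Theorem~\ref{thm:SnsSn} is available.
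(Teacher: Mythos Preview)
Your proposal is correct and follows essentially the same route as the paper: the modified functional with~$u_+$, the Mountain Pass geometry, the $(\mathrm{PS})_c$ analysis below the threshold~$\tfrac1n\mathcal{S}_n^{\,n/2}$ via Brezis--Lieb and Theorem~\ref{thm:SnsSn}, and the truncated Aubin--Talenti bubble estimates leading to the comparison of~$\e^{\kappa_{s,n}}$ against~$\lambda\e^{\beta_{p,n}}$ are exactly the ingredients the paper assembles in Lemmata~\ref{prop:geometry}, \ref{prop:PalaisSmale} and~\ref{lem:Path}.

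Two small deviations are worth noting. First, for the nonnegativity of critical points you test against~$u_-$ directly and use the algebraic inequality $(u_+(x)-u_+(y))(u_-(x)-u_-(y))\le0$, whereas the paper invokes the Weak Maximum Principle of~\cite{BDVV22a}; your argument is more self-contained. Second, in case~(2) you observe that \emph{any} fixed nonnegative~$v\not\equiv0$ gives $\max_{t\ge0}E_\lambda(tv)\to0$ as~$\lambda\to\infty$, which is a slight simplification of the paper's argument (which keeps the bubble~$\eta_\e$ and shows that the maximiser~$t_{\e,\lambda}\to0$ as~$\lambda\to\infty$). Both routes are valid; yours avoids any bubble analysis in the large-$\lambda$ regime. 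Your caveat about possible logarithmic corrections in the lower bound for~$\|u_\e\|_{L^{p+1}}^{p+1}$ is well placed: the paper's computation in~\eqref{eq:etalower} tacitly assumes~$N:=n-(n-2)(p+1)\ne0$, and in the borderline case a~$|\log\e|$ factor appears but does not affect the conclusion.
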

A `dichotomy' similar to that in Theorem \ref{MS:3232} appears
also in the purely local setting, see \cite[Section 2]{BrNir}, where
we have \emph{the same value of $\beta_{p,n}$} and
\begin{equation}\label{CACLKA}\kappa_{s,n} = \kappa_n = n-2.\end{equation}
The main difference between our mixed setting and the purely local one is that,
in our context, when $p\sim 1$ we can prove the existence of solutions to
problem \eqref{eq:mainPBBN} \emph{only for large values of $\lambda$}, independently
of how large the dimension is.

To clarify this phenomenon, let us assume that $n\geq 4$, so that $\kappa_{s,n} = 2-2s$.
Un\-der this assumption, the condition $\kappa_{s,n} > \beta_{s,p}$ boils down to
$$n > 2+\frac{4s}{p-1} =: \theta_{s,p}.$$
Since $\theta_{s,p}\to\infty$ as $p\to 1^+$, when $p\sim 1$ we 
have
$\kappa_{s,n} \leq \beta_{s,p}$,
and thus, by Theorem~\ref{MS:3232}, we deduce that problem \eqref{eq:mainPBBN} possesses
solutions for $\lambda$ sufficiently large.

In the purely local setting, instead, the situation is quite different:
indeed, by~\eqref{CACLKA}, since $\kappa_n = n-2$, we have
$$\kappa_n > \beta_{p,s}\,\,\Longleftrightarrow\,\,n > 2+\frac{4}{p+1} =: \theta_p.$$
Thus, since $\theta_p < 4$ for every $p > 1$, when~$n\geq 4$ we obtain in the classical case the
existence of solutions \emph{for every $\lambda > 0$}, independently of the exponent $p\in (1,2^*-1)$.

In any case, the restriction to large values of $\lambda$ for finding solutions of this type of problems
is a common occurrence also in the local scenario, see in particular
the case~$n=3$ in~\cite[Corollary~2.4]{BrNir}
(see also the nonlocal counterpart in~\cite{BCSS}).

We also remark that the case~$p=1$ in Theorem~\ref{thm:mainLinear} 
is structurally different than the case~$p\in(1,2^*-1)$ in Theorem~\ref{MS:3232}.
Indeed, on the one hand, in both cases we cannot establish the existence of solutions for $\lambda$ close to zero;
on the other hand, while Theorem~\ref{MS:3232} guarantees the existence of solutions for all $\lambda$ large enough,
Theorem~\ref{thm:mainLinear} only detects solutions for~$\lambda$ in a certain interval, showing also that no solutions
exist when~$\lambda$ is too large (therefore, the case~$p=1$ cannot be seen as a limit case
of the setting~$p\in(1,2^*-1)$).

\subsection{Plan of the paper}

The rest of this paper is organized as follows. Section~\ref{sec.Prel} contains the preliminary material needed to set up the appropriate functional spaces and to formalize the problems that we treat.

Then, in Section~\ref{SOINSEC}, we focus on the
mixed order Sobolev-type inequality and prove Theorems~\ref{thm:SnsSn} and~\ref{thm:Snneverachieved}.

Finally, the analysis of the critical problem~\eqref{eq:mainPBBN} occupies Section~\ref{sec:critPb},
where we prove Theorems~\ref{thm:nonexistence}, \ref{thm:mainLinear} and~\ref{MS:3232}.
\bigskip

\textbf{Acknowledgments.} We express our gratitude to the anonymous Referee for
the careful reading of the paper and for his/her
valuable comments, leading to this improved version of the manuscript.

 \section{The functional setting}\label{sec.Prel} 
  In this section we collect the notation and some
  preliminary results which will be used in the rest of the paper.
  More precisely, we introduce the adequate function spaces
  to study problem \eqref{eq:mainPBBN}, and we investigate
  the existence of extremals for some \emph{mixed Sobolev-type inequalities}.
  \medskip
  
Let~$s\in (0,1)$.
If $u:\R^n\to\R$ is a measurable function, we set
  \begin{equation} \label{eq:GagliardoSeminorm}
  [u]_s := \bigg(\iint_{\R^{2n}}\frac{|u(x)-u(y)|^2}{|x-y|^{n+2s}}\,dx\,dy\bigg)^{1/2},
  \end{equation}
  and we refer to $[u]_s$ as the \emph{Gagliardo seminorm} of $u$ (of order $s$).

  Let $\varnothing\neq \Omega\subseteq\R^n$ (with $n\geq 3$) be an \emph{arbitrary} open set,
  not necessarily bounded. We define the function space
  $\mathcal{X}^{1,2}(\Omega)$ as the com\-ple\-tion
  of $C_0^\infty(\Omega)$ with respect to the
  \emph{global norm} 
  $$\rho(u) := \big(\|\nabla u\|^2_{L^2(\R^n)}+[u]^2_s\big)^{1/2},\qquad u\in C_0^\infty(\Omega).$$
  \begin{remark} \label{rem:X12prop}
   A couple of observations concerning the space $\mathcal{X}^{1,2}(\Omega)$ are in order.
   \begin{enumerate}
    \item The norm $\rho(\cdot)$ is induced by the scalar product
    $$\langle u,v\rangle_{\rho} := \int_{\R^n}\nabla u\cdot\nabla v\,dx
    + \iint_{\R^{2n}}\frac{(u(x)-u(y))(v(x)-v(y))}{|x-y|^{n+2s}}\,dx\,dy,$$
where $\cdot$ denotes the usual scalar product in the Euclidean space
    $\R^n$, and $\mathcal{X}^{1,2}(\Omega)$
    is a \emph{Hilbert space}.
    \medskip
    
    \item Even if the function $u\in C_0^\infty(\Omega)$ 
  \emph{identically vanishes outside~$\Omega$}, it is often still convenient to consider 
  in the definition of $\rho(\cdot)$ the $L^2$-norm of $\nabla u$
  \emph{on the whole of $\R^n$}, rather than restricted to~$\Omega$ (though of course the result would be the same): this is to stress that the elements in
  $\mathcal{X}^{1,2}(\Omega)$
  are functions defined \emph{on the entire space $\R^n$} and not only on~$\Omega$ (and this is consistent with the nonlocal
  nature of the operator $\LL$). The benefit of having this global functional setting
  is that these functions can be \emph{globally approximated on $\R^n$} 
  (with respect to the norm $\rho(\cdot)$)
  by smooth functions with compact support in $\Omega$.

  \noindent
  In particular, when $\Omega\neq \R^n$, we will
  see that this \emph{global} definition of 
  $\rho(\cdot)$ implies that the functions in $\mathcal{X}^{1,2}(\Omega)$ naturally
  satisfy the nonlocal Dirichlet condition 
  prescribed in problem \eqref{eq:mainPBBN}, that is,
  \begin{equation} \label{eq:nonlocalDirX12}
   \text{$u\equiv 0$ a.e.\,in $\R^n\setminus\Omega$ for every $u\in\mathcal{X}^{1,2}(\Omega)$}.
   \end{equation}
   \end{enumerate}
  \end{remark}
  In order to better understand the \emph{nature} of the space
  $\mathcal{X}^{1,2}(\Omega)$, we distinguish two cases.
  \medskip
  
  (i)\,\,If \emph{$\Omega$ is bounded}. In this case we first recall the following
  inequality, which
   expresses the \emph{continuous embedding}
   of $H^1(\R^n)$ into $H^s(\R^n)$ (see, e.g., \cite[Proposition~2.2]{DRV}):
   there exists a constant $\mathbf{c} = \mathbf{c}(s) > 0$ such that,
   for every $u\in C_0^\infty(\Omega)$, one has
   \begin{equation} \label{eq:embeddingH1Hs}
    [u]^2_s \leq \mathbf{c}(s)\|u\|_{H^1(\R^n)}^2 = \mathbf{c}(s)\big(\|u\|^2_{L^2(\R^n)}
   + \|\nabla u\|^2_{L^2(\R^n)}\big).
   \end{equation}
   This, together with the classical \emph{Poincar\'e inequality}, implies that
   $\rho(\cdot)$ and the full $H^1$-norm in $\R^n$
   are \emph{actually equivalent} on the space $C^\infty_0(\Omega)$, and hence
   \begin{align*}
    \mathcal{X}^{1,2}(\Omega) & = \overline{C_0^\infty(\Omega)}^{\,\,\|\cdot\|_{H^1(\R^n)}} \\
    & = \{u\in H^1(\R^n):\,\text{$u|_\Omega\in H_0^1(\Omega)$ and 
    $u\equiv 0$ a.e.\,in $\R^n\setminus\Omega$}\}.
   \end{align*}
   
   (ii)\,\,If \emph{$\Omega$ is unbounded}. In this case, even if the \emph{embedding inequality}
   \eqref{eq:embeddingH1Hs} is still satisfied, the Poincar\'e inequality
   \emph{does not hold}; hence, the norm $\rho(\cdot)$ is no more equivalent to the full $H^1$-norm in $\R^n$,
   and $\mathcal{X}^{1,2}(\Omega)$ \emph{is not a subspace of $H^1(\R^n)$}.
   
   On the other had, by the classical \emph{Sobolev inequality} we infer
   the existence of a constant $\mathcal{S} = \mathcal{S}_n > 0$, 
   \emph{independent of the open set $\Omega$}, such that
   \begin{equation} \label{eq:Sobolevmista}
    \mathcal{S}_n\|u\|^2_{L^{2^*}(\R^n)} \leq \|\nabla u\|^2_{L^2(\R^n)}
    \leq \rho(u)^2\qquad\text{for every $u\in C_0^\infty(\Omega)$}.
   \end{equation}
   From this, we deduce that every Cauchy sequence in $C_0^\infty(\Omega)$ (with respect
   to the norm $\rho(\cdot)$) is also a Cauchy sequence \emph{in the space $L^{2^*}(\R^n)$};
   as a consequence, since the functions in $C_0^\infty(\Omega)$ identically vanish out of $\Omega$,
   we obtain
   $$\mathcal{X}^{1,2}(\Omega) = 
   \{u\in L^{2^*}(\R^n):\,\text{$u\equiv 0$ a.e.\,in $\R^n\setminus\Omega$,
   $\nabla u\in L^2(\R^n)$ and $[u]_s < \infty$}\}.$$
   In particular, when $\Omega = \R^n$ we have
   $$\mathcal{X}^{1,2}(\R^n) = \{u\in L^{2^*}(\R^n):\,\text{$\nabla u\in L^2(\R^n)$
   and $[u]_s < \infty$}\}.$$
   \begin{remark} \label{rem:XembeddedLtwostar}
   We stress that the \emph{mixed
   Sobolev-type inequality} \eqref{eq:Sobolevmista} holds \emph{for every open set $\Omega\subseteq\R^n$}
   (bounded or not); thus, we always have
   \begin{equation} \label{eq:contEmbXL2star}
    \mathcal{X}^{1,2}(\Omega)\hookrightarrow L^{2^*}(\Omega).
   \end{equation}
   Furthermore, by exploiting
   the density of $C_0^\infty(\Omega)$ in $\mathcal{X}^{1,2}(\Omega)$,
   we can e\-xtend
   i\-ne\-qua\-lity
   \eqref{eq:Sobolevmista} to \emph{every function $u\in \mathcal{X}^{1,2}(\Omega)$}, thereby obtaining
   $$\mathcal{S}_n\|u\|_{L^{2^*}(\R^n)}^2 
    \leq \rho(u)^2 = \|\nabla u\|_{L^2(\R^n)}^2
    + [u]^2_s\quad\text{for every $u\in\mathcal{X}^{1,2}(\Omega)$}.$$
   \end{remark}
   
   \section{The mixed Sobolev-type inequality}\label{SOINSEC}
Now we further develop our analysis on the \emph{mixed Sobolev inequality}
   \eqref{eq:Sobolevmista},
   that is,
   $$\mathcal{S}_n\|u\|_{L^{2^*}(\R^n)}^2\leq \rho(u)^2\qquad\text{for every $u\in C_0^\infty(\Omega)$},$$
   with the aim of proving Theorems~\ref{thm:SnsSn} and~\ref{thm:Snneverachieved}.
   
   To this end, our first goal is to find the \emph{sharp constant} in 
 \eqref{eq:Sobolevmista}, namely,
 \begin{equation} \label{eq:SnsOmegaProvv}
  \mathcal{S}_{n,s}(\Omega) := 
  \inf\big\{\rho(u)^2:\,u\in C_0^\infty(\Omega)\cap \mathcal{M}\big\}, 
 \end{equation}
 where $\mathcal{M}$ is the \emph{unit sphere} in $L^{2^*}(\R^n)$, that is,
 $$\mathcal{M}:= \{u\in L^{2^*}(\R^n):\,\|u\|_{L^{2^*}(\R^n)} = 1\}.$$
 We stress that, since $C_0^\infty(\Omega)$ is \emph{dense}
 in the space $\mathcal{X}^{1,2}(\Omega)$ (with respect to the norm $\rho(\cdot)$), 
 by the continuous embedding \eqref{eq:contEmbXL2star} we have
 $$\mathcal{S}_{n,s}(\Omega) = \inf\big\{\rho(u)^2:\,\text{$u\in\mathcal{X}^{1,2}(\Omega)
 \cap \mathcal{M}$}\big\}.$$
 Moreover, $\mathcal{S}_{n,s}(\Omega)$ is \emph{translation-invariant} with respect to~$\Omega$, that is,
 $$\mathcal{S}_{n,s}(x_0+\Omega) = \mathcal{S}_{n,s}(\Omega)\qquad\text{for every $x_0\in\R^n$}.$$
 \begin{remark} \label{rem:propSnrecall}
 Before starting with the study of the minimization problem \eqref{eq:SnsOmegaProvv},
 we review here below some well-known properties
 of the \emph{best constant} $\mathcal{S}_n$ in the classical Sobolev
 inequality, which is defined as follows: 
 $$\mathcal{S}_n := 
 \inf\big\{\|\nabla u\|^2_{L^2(\Omega)}:\,u\in C^\infty_0(\Omega)\cap \mathcal{M}\big\}.$$
 For a  proof of these facts we refer to 
 \cite{Aubin, Talenti}
 (see also \cite[Section~1]{BrNir}).
 \begin{enumerate}
  \item The constant $\mathcal{S}_n$ is \emph{independent of the open set $\Omega$}, 
  and depends only on the dimension $n$, 
  as stated in~\eqref{eq:bestSnloc}.
  \vspace*{0.1cm}
  
  \item Given any open set $\Omega\subseteq\R^n$ (bounded or not), we have
  $$\mathcal{S}_n = \inf\big\{\|\nabla u\|^2_{L^2(\R^n)}:\,\text{$u\in\mathcal{D}_0^{1,2}(\Omega)
  \cap \mathcal{M}$}\big\},$$
  where $\mathcal{D}_0^{1,2}(\Omega)$ is the space
  defined as the completion of $C_0^\infty(\Omega)$ with respect to the gradient norm
  $u\mapsto \mathcal{G}(u) := \|\nabla u\|_{L^2(\R^n)}$. 
  \vspace*{0.1cm}
  
  \item If $\Omega\subseteq\R^n$ is a \emph{bounded open set}, then $\mathcal{S}_n$ is 
  \emph{never achieved}.
  \vspace*{0.1cm}
  
  \item If $\Omega = \R^n$, then $\mathcal{S}_n$ 
  is not achieved in $C_0^\infty(\R^n)$
  but
  \emph{it is achieved in the big\-ger space $\mathcal{D}^{1,2}_0(\R^n)$}
  by the family of functions
  $$\mathcal{F} = \big\{U_{t,x_0}(x) = t^{\frac{2-n}{2}}\mathcal{U}((x-x_0)/t):\,\text{$t > 0,\,x_0\in\R^n$}
  \big\},$$
  where \begin{equation*}
  \mathcal{U}(z) := c\,(1+|z|^2)^{\frac{2-n}{2}}\end{equation*} and $c > 0$ is such that 
  $\|\mathcal{U}\|_{L^{2^*}(\R^n)} = 1$.
 \end{enumerate}
 \end{remark}
 We are thus ready to prove Theorem~\ref{thm:SnsSn}.
 
 \begin{proof}[Proof of Theorem~\ref{thm:SnsSn}]
Since $\rho(u)\geq \|\nabla u\|_{L^2(\Omega)}$ for every $u\in C^\infty_0(\Omega)$,
  we have 
  $$\mathcal{S}_{n,s}(\Omega)\geq \inf\big\{\|\nabla u\|^2_{L^2(\Omega)}:\,u\in C_0^\infty(\Omega)
  \cap\mathcal{M}\big\}
  = \mathcal{S}_n.$$
  To prove the reverse inequality, by the \emph{translation-invariance} of $\mathcal{S}_{n,s}(\Omega)$
  we assume that $x_0 = 0\in \Omega$, and we let $r > 0$ be such that $B_r(0)\subseteq\Omega$.
  We now observe that, given any $u\in C_0^\infty(\R^n)\cap \mathcal{M}$,
  there exists $k_0 = k_0(u)\in\N$ such that 
  $$\mathrm{supp}(u)\subseteq B_{kr}(0)\quad\text{for every $k\geq k_0$};$$
  as a consequence, setting $u_k := k^{\frac{n-2}{2}}u(kx)$ (for $k\geq k_0$), we readily see that
  $$\mathrm{supp}(u_k)\subseteq B_r(0)\subseteq \Omega\quad\text{and}\quad
  \|u_k\|_{L^{2^*}(\R^n)} = 1.$$
  Taking into account the definition of $\mathcal{S}_{n,s}(\Omega)$, we find that, for every $k\geq k_0$,
  \begin{align*}
   \mathcal{S}_{n,s}(\Omega) & \leq \rho(u_k)^2 = 
   \|\nabla u_k\|^2_{L^2(\R^n)}+[u_k]^2_s
= \|\nabla u\|^2_{L^2(\R^n)}+k^{2s-2}[u]^2_s .
  \end{align*}
  From this, letting $k\to\infty$ (and recalling that $s < 1$), we obtain
  $$\mathcal{S}_{n,s}(\Omega)\leq \|\nabla u\|^2_{L^2(\R^n)}.$$
  By the arbitrariness of $u\in C^\infty_0(\Omega)\cap \mathcal{M}$ and the fact that
   $\mathcal{S}_n$ \emph{is independent of the open set $\Omega$}
   (see Remark \ref{rem:propSnrecall}-(1)), we finally infer that
   $$\mathcal{S}_{n,s}(\Omega)\leq \inf\big\{\|\nabla u\|^2_{L^2(\R^n)}:\,C^\infty_0(\Omega)\cap \mathcal{M}
   \big\} = \mathcal{S}_n,$$
   and hence $\mathcal{S}_{n,s}(\Omega) = \mathcal{S}_n$.
 \end{proof}
 
 We are now in the position of proving Theorem~\ref{thm:Snneverachieved}.
 
 \begin{proof}[Proof of Theorem~\ref{thm:Snneverachieved}]
  Arguing by contradiction, let us suppose that there exists a nonzero
  function $u_0\in\mathcal{X}^{1,2}(\Omega)$
  such that 
  $\|u_0\|_{L^{2^*}(\R^n)} = 1$ and
  $$\rho(u_0)^2 = \|\nabla u_0\|^2_{L^2(\R^n)}+[u_0]^2_s = \mathcal{S}_{n}.$$
  Taking into account that $\mathcal{X}^{1,2}(\Omega)\subseteq\mathcal{D}_0^{1,2}(\Omega)$
  (this inclusion being a straightforward of the fact that $\rho(\cdot)\geq \mathcal{G}(\cdot)$,
  see Remark \ref{rem:propSnrecall}-(2)),
  we infer that
  $$\mathcal{S}_n \leq \|\nabla u_0\|^2_{L^2(\Omega)} \leq \|\nabla u_0\|^2_{L^2(\R^n)}+[u_0]^2_s
  = \rho(u_0)^2 = \mathcal{S}_n,$$
  from which we derive that $[u_0]_s = 0$.
  As a consequence, the function 
  $u_0$ must be \emph{constant in $\R^n$}, but this is contradiction with the fact that
  $\|u_0\|_{L^{2^*}(\R^n)} = 1$.
  \end{proof}
  \begin{remark} \label{rem:Snasymptotic}
  Even if Theorem \ref{thm:Snneverachieved} shows that
   the constant $\mathcal{S}_{n,s}(\Omega) = \mathcal{S}_n$ is
  \emph{never achieved} in the space $\mathcal{X}^{1,2}(\Omega)$ 
  (independently of the open set $\Omega$), in the particular case $\Omega = \R^n$
  we can prove that $\mathcal{S}_n$ is achieved `in the limit':
  more precisely, if $\mathcal{F} = \{U_{t,x_0}\}$ is as in Remark \ref{rem:propSnrecall}-(4), we have
  $$\rho(U_{t,x_0})^2 \to \mathcal{S}_n\quad\text{as $t\to\infty$}.$$
Indeed, 
$$ 
|\mathcal{U}(z) |\le C\,\min\{1,|z|^{2-n}\}\qquad{\mbox{and}}\qquad
|\nabla\mathcal{U}(z) |\le C\,\min\{|z|, \,|z|^{1-n}\},$$
for some~$C>0$, therefore (up to renaming~$C$ line after line)
\begin{eqnarray*}
[\mathcal{U}(z)]_s^2&=&
\iint_{\R^{2n}}\frac{|\mathcal{U}(x+y)-\mathcal{U}(x)|^2}{|y|^{n+2s}}\,dx\,dy\\&\le&
\iint_{\R^{n}\times B_1}
\left|
\int_0^1\nabla \mathcal{U}(x+ty)\cdot y\,dt\right|^2 \frac{dx\,dy}{|y|^{n+2s}}\\&&\qquad
+2\iint_{\R^{n}\times(\R^n\setminus B_1)}\big(|\mathcal{U}(x+y)|^2+|\mathcal{U}(x)|^2\big)\frac{dx\,dy}{|y|^{n+2s}}\\&\le&
\iiint_{\R^{n}\times B_1\times(0,1)}\min\{|x+ty|^2, \,|x+ty|^{2(1-n)}\}
\,\frac{dx\,dy\,dt}{|y|^{n+2s-2}}\\&&\qquad+4\iint_{\R^{n}\times(\R^n\setminus B_1)}
|\mathcal{U}(z)|^2\frac{dz\,dy}{|y|^{n+2s}}
\\&\le&\iiint_{\R^{n}\times B_1\times(0,1)}\min\{|z|^2, \,|z|^{2(1-n)}\}
\,\frac{dz\,dy\,dt}{|y|^{n+2s-2}}\\&&\qquad
+C\iint_{\R^{n}\times(\R^n\setminus B_1)}
\min\{1,|z|^{2(2-n)}\}\frac{dz\,dy}{|y|^{n+2s}},
\end{eqnarray*}
which is finite.

Hence, $\mathcal{U}\in\mathcal{X}^{1,2}(\R^n)$ and consequently~$U_{t,x_0}\in\mathcal{X}^{1,2}(\R^n)$ for every $t > 0$ and $x_0\in\R^n$.

Moreover, recalling that
  $$U_{t,x_0}(x) = t^{\frac{2-n}{2}}\mathcal{U}\Big(\frac{x-x_0}{t}\Big)\quad\text{and}\quad
  \|U_{t,x_0}\|_{L^{2^*}(\R^n)} = \|\mathcal{U}\|_{L^{2^*}(\R^n)} = 1,$$
  by arguing as in the proof of Theorem \ref{thm:SnsSn} we have
  \begin{align*}
   \rho(U_{t,x_0})^2 = \rho(U_{t,0})^2 = \|\nabla \mathcal{U}\|^2_{L^2(\R^n)}+t^{2s-2}[\mathcal{U}]^2_s.
  \end{align*}
  From this, since $\mathcal{U} = U_{1,0}$ is an optimal function in the classical
  Sobolev inequality (and since $s < 1$), by letting $t\to\infty$ we obtain
  $$\rho(U_{t,x_0})^2\to \|\nabla \mathcal{U}\|_{L^2(\R^n)}^2 = \mathcal{S}_n.$$
   \end{remark}
   
 \section{Critical problems driven by \texorpdfstring{$\mathcal{L}$}{L}} \label{sec:critPb}
 We now develop our study concerning the solvability of 
 problem \eqref{eq:mainPBBN}.
 Throughout this section, we tacitly understand that
 \begin{itemize}
  \item[(i)] $\Omega\subseteq\R^n$ is a \emph{bounded open set};
  \item[(ii)] $\lambda\in\R$;
  \item[(iii)] $1\leq p < 2^*-1$.
 \end{itemize}
 Moreover, we inherit all the
 definitions and notation of 
 Sections \ref{sec.Prel} and~\ref{SOINSEC}.
We also introduce the convex cone
 $$\mathcal{X}^{1,2}_+(\Omega) := \{u\in\mathcal{X}^{1,2}(\Omega):\,\text{$u\geq 0$ a.e.\,in $\Omega$}\}.$$
 We begin by giving the precise definition of \emph{solution} to \eqref{eq:mainPBBN}.
 \begin{definition} \label{def:solPb}
  We say that a function $u\in\mathcal{X}^{1,2}_+(\Omega)$ is a 
  \emph{solution} to problem \eqref{eq:mainPBBN} if it
  sa\-ti\-sfies the following properties:
  \begin{enumerate}
   \item $|\{x\in\Omega:u(x) > 0\}| > 0$;
   \item for every test function $v\in\mathcal{X}^{1,2}(\Omega)$ we have
   \begin{equation} \label{eq:weakSol}
   \begin{split}
    & \int_{\Omega}\nabla u\cdot\nabla v\,dx 
    +\iint_{\R^{2n}}\frac{(u(x)-u(y))(v(x)-v(y))}{|x-y|^{n+2s}}\,dx\,dy \\
    & \qquad = \int_{\Omega}(u^{2^*-1}+\lambda u^p)v\,d x.
    \end{split}
   \end{equation}
  \end{enumerate}
 \end{definition}

We observe that, since $\Omega\subseteq\R^n$ is bounded, 
 $$\mathcal{X}^{1,2}(\Omega) = \{u\in H^1(\R^n):\,\text{$u|_\Omega\in H_0^1(\Omega)$ and 
    $u\equiv 0$ a.e.\,in $\R^n\setminus\Omega$}\}.$$
 As a consequence, the assumption $u\in\mathcal{X}_+^{1,2}(\Omega)$ \emph{contains}
 both the Dirichlet boundary con\-di\-tion $u\equiv 0$ a.e.\,in $\R^n\setminus\Omega$ and
 the sign condition $u\geq 0$ a.e.\,in $\Omega$.

Moreover, all the integrals in \eqref{eq:weakSol} are finite, due to the embedding $\mathcal{X}^{1,2}(\Omega) \hookrightarrow 
 L^{2^*}(\R^n)$ and the fact that~$p< 2^*-1$.

 Before proceeding we highlight that, if $u\in\mathcal{X}^{1,2}_+(\Omega)$
 is a given function, the validity of identity \eqref{eq:weakSol}
 (which expresses the fact that $u$ is a solution to pro\-blem
 \eqref{eq:mainPBBN}) is \emph{actually equivalent} to the following condition
 $$\d\mathcal{F}_{\lambda, p}(u)\equiv 0\quad\text{on $\mathcal{X}^{1,2}(\Omega)$},$$
 where $\mathcal{F}_{\lambda, p}$ is the $C^1$-functional defined on the Hilbert space
 $\mathcal{X}^{1,2}(\Omega)$ by
 \begin{equation} \label{eq:defFlambda}
 \mathcal{F}_{\lambda, p}(u) := \frac{1}{2}\rho(u)^2
 -\frac{1}{2^*}\int_{\Omega}|u|^{2^*}\,dx-\frac{\lambda}{p+1}\int_\Omega|u|^{p+1}\,d x .
 \end{equation}
 Thus, we derive that the solutions to problem \eqref{eq:mainPBBN} are precisely
 the \emph{nonnegative critical points} of the functional $\mathcal{F}_{\lambda,p}$.
 This characterization will be used, as a crucial tool, in order
 to study the existence of solutions to problem \eqref{eq:mainPBBN}.
 \medskip
 
 With Definition \ref{def:solPb} at hand, 
 we establish the following boundedness and positivity result.
 
 \begin{theorem} \label{thm:aprioriPropSol}
  Assume that there exists
  a solution $u_0\in\mathcal{X}^{1,2}_+(\Omega)$ to pro\-blem 
  \eqref{eq:mainPBBN}
for some $\lambda \in\R$ and $p\in[1,2^*-1)$.
  Then, the following facts hold:
  \begin{enumerate}
   \item $u_0\in L^\infty(\R^n)$;
   \item if, in addition, $\lambda \geq 0$, then $u_0 > 0$ a.e.\,in $\Omega$.
  \end{enumerate}
 \end{theorem}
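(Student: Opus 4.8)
The statement concerns boundedness ($L^\infty$) and strict positivity of solutions to the mixed‐order problem~\eqref{eq:mainPBBN}. The natural strategy is a Moser/Brezis–Kato‐type iteration for (1), followed by a strong maximum principle argument for (2). Let me sketch each.

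For part (1), the plan is to run a Moser iteration adapted to the mixed operator $\LL = -\Delta + (-\Delta)^s$. First I would fix $u_0 \in \mathcal X^{1,2}_+(\Omega)$ a solution, and for $\beta \geq 1$ and $T>0$ introduce the truncated test function $v = u_0 \min\{u_0, T\}^{2(\beta-1)} \in \mathcal X^{1,2}(\Omega)$ (the truncation guarantees $v$ lies in the energy space and that all quantities below are finite; one removes the truncation at the end by monotone convergence). Plugging $v$ into the weak formulation~\eqref{eq:weakSol}, the local term $\int_\Omega \nabla u_0 \cdot \nabla v\,dx$ produces, as usual, a term controlling $\|\nabla (u_0 \min\{u_0,T\}^{\beta-1})\|_{L^2}^2$ up to a constant depending on $\beta$. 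The crucial point is that the nonlocal term is \emph{sign‐helpful}: by the elementary inequality $(a-b)(a\varphi(a) - b\varphi(b)) \geq 0$ whenever $\varphi$ is nondecreasing (here $\varphi(t) = \min\{t,T\}^{2(\beta-1)}$), the Gagliardo bilinear form evaluated on $(u_0, v)$ is nonnegative; hence it can simply be discarded from the left‐hand side, and we are left with exactly the estimate one would get for $-\Delta$ alone. From there the argument is completely standard: use the mixed Sobolev inequality~\eqref{eq:Sobolevmista} (equivalently the classical one) on the left, estimate the right‐hand side $\int_\Omega (u_0^{2^*-1} + \lambda u_0^p)\,v\,dx$ by splitting $\{u_0 \leq M\} \cup \{u_0 > M\}$ and absorbing the bad part for $M$ large (this is the Brezis–Kato trick, exploiting $u_0 \in L^{2^*}$), and iterate over a geometric sequence of exponents $\beta_k = (2^*/2)^k$ to reach $L^\infty$. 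I would state the key algebraic inequality $(a-b)(a\varphi(a)-b\varphi(b))\ge 0$ as a small lemma and verify it by the case distinction $a\ge b$ / $a<b$.

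For part (2), assuming now $\lambda \geq 0$, the solution satisfies $\LL u_0 = u_0^{2^*-1} + \lambda u_0^p \geq 0$ in the weak sense, i.e. $u_0$ is a nonnegative weak supersolution of $\LL u = 0$ that is not identically zero (by property~(1) of Definition~\ref{def:solPb}). I would then invoke the strong maximum principle for the mixed operator $\LL$: a nonnegative weak supersolution which vanishes on a set of positive measure must vanish identically. This can be proved directly, or cited from the existing literature on mixed local/nonlocal operators (e.g. the references \cite{BDVV, BJK10} on maximum principles, or \cite{BMV} for the weak Harnack inequality). Concretely: if $|\{u_0 = 0\}\cap\Omega|>0$, then testing suitably (or applying the weak Harnack inequality / the logarithmic estimate on balls where $u_0$ is positive) forces $u_0 \equiv 0$ on $\Omega$, contradicting~(1). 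Since $u_0 \geq 0$ and is not a.e.\ zero, we conclude $u_0 > 0$ a.e.\ in $\Omega$. Here the nonlocal term actually \emph{helps}: even a purely nonlocal supersolution cannot vanish on a positive‐measure set without being trivial, because the nonlocal tail would force a strictly negative contribution at such points.

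\textbf{Main obstacle.} The delicate step is the Moser iteration bookkeeping in part~(1): one must be careful that the truncated test functions genuinely belong to $\mathcal X^{1,2}(\Omega)$ (using that $\Omega$ is bounded so that $\mathcal X^{1,2}(\Omega)\subseteq H^1(\R^n)$, cf.\ Remark~\ref{rem:defwellposed}) and that, after discarding the nonlocal term, the constants in the iteration depend on $\beta$ only polynomially so that the infinite product converges. The sign of the nonlocal bilinear form on the pair $(u_0, v)$ is what makes the whole scheme go through without any new nonlocal difficulty, so isolating and proving that monotonicity inequality cleanly is the conceptual heart of the proof. For part~(2), the only subtlety is quoting (or reproving) a strong maximum principle in exactly the weak functional framework of Definition~\ref{def:solPb}; this is by now standard for mixed operators, so I expect it to be routine.
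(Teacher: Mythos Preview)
Your proposal is correct and, at the level of ideas, matches what the paper does; the only real difference is in presentation. For part~(1) the paper simply invokes \cite[Theorem~4.1]{BMV} as a black box, checking that $f(x,t)=t^{2^*-1}+\lambda t^p$ satisfies the growth bound $|f|\le C(1+t^{2^*-1})$; you instead unpack that black box by running the Brezis--Kato/Moser iteration directly, observing (correctly) that the nonlocal bilinear form tested against $u_0\varphi(u_0)$ is nonnegative and can be dropped. This is exactly the engine behind the cited result, so the two routes are the same mathematics at different levels of detail---your version is more self-contained, the paper's is shorter. For part~(2) the two approaches coincide: both note that $\lambda\ge0$ gives $\LL u_0\ge 0$ weakly and then appeal to the strong maximum principle for $\LL$ (the paper cites \cite[Theorem~3.1]{BMV}).
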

 \begin{proof}
  The boundedness of $u_0$ is a direct consequence of \cite[Theorem~4.1]{BMV}, 
  applied here with the choice $f(x,t) := t^{2^*-1}+\lambda t^p$. Indeed, $u_0$ is a solution to
  $$\begin{cases}
  \LL u = f(x,u) & \text{in $\Omega$}, \\
  u \gneqq 0 & \text{in $\Omega$}, \\
  u \equiv 0 & \text{in $\R^n\setminus\Omega$},
  \end{cases}$$
  and the function $f$ satisfies the following properties:
  \begin{itemize}
   \item[(i)] $f:\Omega\times[0,\infty)\to\R$ is a Carath\'{e}odory function;
   \vspace*{0.05cm}
   
   \item[(ii)] $f(\cdot,t)\in L^\infty(\Omega)$ for every $t\geq 0$;
   \vspace*{0.05cm}
   
   \item[(iii)] $|f(x,t)| \leq (1+|\lambda|)(1+t^{2^*-1})$ for every $t\geq 0$.
  \end{itemize}
  In view of these properties of $f$, we are then entitled to invoke \cite[Theorem~4.1]{BMV}
  (see also  \cite[Remark~4.2]{BMV}), ensuring that $u_0\in L^\infty(\Omega)$. This,
  jointly with the fact that 
  $u_0\equiv 0$ a.e.\,in $\R^n\setminus\Omega$,
  implies
  that $u_0\in L^\infty(\R^n)$, as desired.
  \medskip
  
  Assume now that $\lambda \geq 0$. Recalling that $u_0$ is a solution
  to \eqref{eq:mainPBBN} (in the sense of De\-fi\-ni\-tion \ref{def:solPb}),
  and since $u_0\geq 0$ a.e.\,in $\Omega$, for every $v\in\mathcal{X}^{1,2}_+(\Omega)$ we have
  \begin{equation} \label{eq:LLugeqzeroweak}
  \begin{split}
     & \int_{\Omega}\nabla u_0\cdot\nabla v\,dx 
    +\iint_{\R^{2n}}\frac{(u_0(x)-u_0(y))(v(x)-v(y))}{|x-y|^{n+2s}}\,dx\,dy \\
    & \qquad = \int_{\Omega}(u_0^{2^*-1}+\lambda u_0^p)v\,d x \geq 0.
    \end{split}
  \end{equation}
  As a consequence, we can apply the Strong Maximum Principle
  in \cite[Theorem~3.1]{BMV} (this time with the choice $f(x,t)\equiv 0$), ensuring that
  $$\text{either $u_0 > 0$ a.e.\,in $\Omega$\quad or\quad $u_0\equiv 0$ a.e.\,in $\Omega$}.$$
  On the other hand, since $|\{u_0 > 0\}| > 0$, 
  we have that $u_0$ is not identically va\-ni\-shing in $\Omega$; thus,
  $u_0 > 0$ a.e.\,in $\Omega$,
  and the proof is complete.
 \end{proof}
 Thanks to Theorem \ref{thm:aprioriPropSol}, we can now prove
 Theorem~\ref{thm:nonexistence}.
 \begin{proof}[Proof of Theorem~\ref{thm:nonexistence}]
  Arguing by contradiction, let us suppose that there exists a nonzero
  solution $u_0$ to problem \eqref{eq:mainPBBN}. Owing to Theorem \ref{thm:aprioriPropSol},
  we have 
  $u_0\in L^\infty(\R^n)$;
  moreover, defining $f(x,t) := |t|^{2^*-2}t+\lambda |t|^{p-1}t$ (with $t\in\R$), 
  we see that $u_0$ solves
  \begin{equation} \label{eq:pbformaRosSerra}
  \begin{cases}
  \LL u = f(x,u) & \text{in $\Omega$}, \\
  u \equiv 0 & \text{in $\R^n\setminus\Omega$},
  \end{cases}
  \end{equation}
  and the nonlinearity $f$ satisfies the following properties:
  \begin{itemize}
   \item[(i)] $f\in C^{0,1}_{\mathrm{loc}}(\overline{\Omega}\times\R)$;
   \vspace*{0.05cm}
   
   \item[(ii)] for every $x\in\Omega$ and every $t\in\R$, we have
   \begin{align*}
    \frac{n-2}{2}\,tf(x,t) & = \frac{n-2}{2}\big(|t|^{2^*}+\lambda |t|^{p+1}\big)
    = n\bigg(\frac{|t|^{2^*}}{2^*}+\frac{\lambda |t|^{p+1}}{2^*}\bigg) \\
    & (\text{since $\lambda\leq 0$ and $p\leq 2^*-1$}) \\
    & \geq n\bigg(\frac{|t|^{2^*}}{2^*}+\frac{\lambda |t|^{p+1}}{p+1}\bigg)
    = n\int_0^t f(x,\tau)\,d\tau.
   \end{align*}
  \end{itemize}
  As a consequence, since $u_0$ is a \emph{bounded solution} of 
  \eqref{eq:pbformaRosSerra}, $f$ satisfies (i)-(ii)
  and $\Omega$ is star-shaped,
  we are entitled to apply \cite[Theorem~1.3]{RSPoho}, ensuring that 
  $$\text{$u_0\equiv 0$ a.e.\,in $\Omega$}.$$
  This is clearly a contradiction with the fact that $u_0\gneqq 0$ in $\Omega$
  (recall that $u_0$ is a solution to \eqref{eq:mainPBBN}, see Definition \ref{def:solPb}),
  and the proof is complete.
 \end{proof}
{F}rom now on we assume that
 \begin{equation} \label{eq:lambdapositive}
  \lambda > 0.
 \end{equation}
 Under this assumption
 (which is somewhat in agreement with Theorem \ref{thm:nonexistence})
  and taking into account all the discussion
 carried out so far, we can start our study of the \emph{solvability} of
 problem \eqref{eq:mainPBBN}. To this end, since the \emph{linear case} $p = 1$
 and the \emph{superlinear case} $p > 1$ present some subtle differences,
 it is worth treating these cases separately.
 \subsection{The linear case \texorpdfstring{$p = 1$}{p = 1}} \label{sec:linear}
 We begin by studying the solvability of \eqref{eq:mainPBBN} in the
 \emph{linear case $p = 1$}, that is, we begin by considering the problem
 \begin{equation} \label{eq:mainPBlinear}
  \begin{cases}
  \LL u = u^{2^*-1}+\lambda u & \text{in $\Omega$}, \\
  u \gneqq 0 & \text{in $\Omega$}, \\
  u \equiv 0 & \text{in $\R^n\setminus\Omega$},
  \end{cases}
 \end{equation}
 In this \emph{linear} context, the existence of solutions
 to problem \eqref{eq:mainPBlinear} for a given $\lambda > 0$
 is deeply influenced
 by the \emph{first Dirichlet eigenvalues} of $(-\Delta)^s$ and of $\LL$.
 
 We recall that, given the bounded open set $\Omega$, one defines
 \begin{itemize}
  \item[(i)] the first Dirichlet eigenvalue of $(-\Delta)^s$ in $\Omega$ as
  \begin{equation} \label{eq:deflambda1s}
   \lambda_{1,s} := 
  \inf\big\{[u]^2_s:\,\text{$u\in C_0^\infty(\Omega)$ and $\|u\|_{L^2(\R^n)} = 1$}\big\};
  \end{equation}
  \item[(ii)] the first Dirichlet eigenvalue of $\LL$ in $\Omega$ as
  \begin{equation} \label{eq:deflambda1}
   \lambda_{1} := 
  \inf\big\{\rho(u)^2:\,\text{$u\in C_0^\infty(\Omega)$ and $\|u\|_{L^2(\R^n)} = 1$}\big\}.
  \end{equation}
 \end{itemize}
 Clearly, both $\lambda_{1,s}$ and $\lambda_1$ depend on the open set $\Omega$;
 however, in order to simplify the notation, we avoid to make explicit this dependence
 in what follows.
 
 For the sake
 of completeness (and for a future reference), 
 we then present in the next remark a short
 overview of the main properties
 of $\lambda_{1,s}$ and $\lambda_{1}$.
 \begin{remark}[Properties of $\lambda_{1,s}$ and $\lambda_{1}$] \label{rem:problambdalambdas}
 As regards $\lambda_{1,s}$ we first
  observe that, since the map 
  $u\mapsto [u]_s =: \mathcal{N}(u)$ is
 a norm on $C^\infty_0(\Omega)$
 which is equivalent to the full $H^s$-norm
 (see, e.g., \cite[Theorem~6.5]{DRV} and recall that $\Omega$ is bounded), one has
 $$\lambda_{1,s} = 
 \inf\big\{[u]^2_s:\,\text{$u\in \mathcal{D}_0^{s,2}(\Omega)$ and $\|u\|_{L^2(\R^n)} = 1$}\big\},$$
 where $\mathcal{D}_0^{s,2}(\Omega)\subseteq L^2(\R^n)$ is the completion of
 $C_0^{\infty}(\Omega)$ with respect to $\mathcal{N}$.
 Moreover, since the embedding $\mathcal{D}_0^{s,2}(\Omega)\hookrightarrow L^2(\R^n)$
 is compact, 
 it is not difficult to see that $\lambda_{1,s}$
 is \emph{actually achieved} in this larger space $\mathcal{D}_0^{s,2}(\Omega)$, that is,
 $$\exists\,\,\phi_0\in\mathcal{D}_0^{s,2}(\Omega):\,\text{$\|\phi_0\|_{L^2(\R^n)} = 1$
 and $[\phi_0]^2 = \lambda_{1,s} > 0$}.$$
 As a matter of fact, the above function $\phi_0$ can be chosen to be \emph{strictly positive
 a.e.\,in $\Omega$} (see, e.g., \cite[Proposition~9]{SerValEigen}); 
 in addition, since $\phi_0$ is a constrained minimizer of the functional $u\mapsto [u]_s^2$,
 by the Lagrange Multiplier Rule we have that $\phi_0$ is weak solution to the \emph{eigenvalue problem}
 \begin{equation*}
  \begin{cases}
 (-\Delta)^su = \lambda_{1,s} u & \text{in $\Omega$}, \\
 u \not\equiv 0 & \text{in $\Omega$}, \\
 u\equiv 0 & \text{in $\R^n\setminus\Omega$}.
 \end{cases}
 \end{equation*}
 As regards $\lambda_{1}$, we have completely analogous
 results (with the obvious modifications): first of all, since $\rho(\cdot)$
 defines
 a norm on $C^\infty_0(\Omega)$
 which is e\-qui\-va\-lent to the full $H^1$-norm in $\R^n$
 (as $\Omega$ is bounded), we have
 $$\lambda_{1} = 
 \inf\big\{\rho(u)^2:\,\text{$u\in \mathcal{X}^{1,2}(\Omega)$ and $\|u\|_{L^2(\R^n)} = 1$}\big\}.$$
 Moreover, since the embedding $\mathcal{X}^{1,2}(\Omega)\hookrightarrow L^2(\R^n)$
 is compact, it is not difficult to see that $\lambda_{1}(\Omega)$
 is \emph{actually achieved} in this larger space $\mathcal{X}^{1,2}(\Omega)$, that is,
 $$\exists\,\,\psi_0\in\mathcal{X}^{1,2}(\Omega):\,\text{$\|\psi_0\|_{L^2(\R^n)} = 1$
 and $\rho(\psi_0)^2 = \lambda_{1} > 0$}.$$
 As a matter of fact, the above function $\psi_0$ can be chosen to be \emph{strictly positive
 a.e.\,in $\Omega$}
 (see, e.g., \cite[Proposition~5.1]{BMV}); in addition, 
 since $\psi_0$ is a constrained minizer of the functional
 $\rho(\cdot)^2$, by the Lagrange Multiplier Rule we easily see that
 $\psi_0$ is weak solution to the \emph{eigenvalue problem}
 \begin{equation*}
  \begin{cases}
 \LL u = \lambda_{1} u & \text{in $\Omega$}, \\
 u \not\equiv 0 & \text{in $\Omega$}, \\
 u\equiv 0 & \text{in $\R^n\setminus\Omega$}.
 \end{cases}
 \end{equation*}
 \end{remark}

 We are now 
 approaching the proof of Theorem \ref{thm:mainLinear}. We will achieve this
 through several independent results.
 To begin with, we prove a lemma linking the existence of solutions
 to \eqref{eq:mainPBlinear} with the existence of constrained minimizers
 for a suitable functional.
 
 As a matter of facts, since solutions to 
 problem \eqref{eq:mainPBBN} 
 are precisely the \emph{unconstrained} nonnegative critical points
 of the functional $\mathcal{F}_{\lambda,p}$ defined in \eqref{eq:defFlambda} (for all $p\geq 1$),
 we now aim at proving that, in this \emph{linear context}, there is a link
 between \emph{existence of solutions}
 to \eqref{eq:mainPBlinear} and \emph{existence of minimizers}
 for the functional
 \begin{equation} \label{eq:QformQlambda}
  \mathcal{Q}_\lambda(u) := \rho(u)^2-\lambda\|u\|^{2}_{L^2(\R^n)}\qquad (u\in\mathcal{X}^{1,2}(\Omega)),
  \end{equation}
 constrained to the manifold $\mathcal{V}(\Omega) := \mathcal{X}^{1,2}(\Omega)\cap\mathcal{M}$.
 \begin{lemma} \label{lem:solMinLinear}
  For every fixed $\lambda > 0$, we define
  \begin{equation} \label{eq:defSnlambdaLin}
   \mathcal{S}(\lambda) := \inf_{u\in\mathcal{V}(\Omega)}\mathcal{Q}_\lambda(u).
  \end{equation}
  We assume that $\mathcal{S}(\lambda) > 0$
  and that $\mathcal{S}(\lambda)$ \emph{is achieved}, that is, there exists 
  some function $w\in\mathcal{V}(\Omega)$
   such that
   $\mathcal{Q}_\lambda(w) = \mathcal{S}(\lambda)$.
   
   Then, there exists a solution to \eqref{eq:mainPBlinear}.
 \end{lemma}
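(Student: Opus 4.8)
The plan is to show that an explicit positive rescaling of the constrained minimizer $w$ is a weak solution to~\eqref{eq:mainPBlinear} in the sense of Definition~\ref{def:solPb} (with $p=1$). The argument has three moves: first replace $w$ by a nonnegative minimizer; then write the Euler--Lagrange equation via a Lagrange multiplier and identify the multiplier with $\mathcal{S}_n(\lambda)$; finally dilate $w$ by the right constant to absorb the multiplier.

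First I would reduce to the case $w\geq 0$. Passing from $w$ to $|w|$ leaves $\|\cdot\|_{L^{2}(\R^n)}$ and $\|\cdot\|_{L^{2^*}(\R^n)}$ unchanged, while $\|\nabla|w|\|_{L^2(\R^n)}=\|\nabla w\|_{L^2(\R^n)}$ and $[|w|]_s\leq [w]_s$ by the elementary pointwise bound $\big||w|(x)-|w|(y)\big|\leq |w(x)-w(y)|$. Hence $|w|\in\mathcal{V}(\Omega)$ and $\mathcal{Q}_\lambda(|w|)\leq\mathcal{Q}_\lambda(w)=\mathcal{S}_n(\lambda)$, so by minimality $|w|$ is again a minimizer; we may thus assume $w\in\mathcal{X}^{1,2}_+(\Omega)$ with $w\geq 0$ and $\|w\|_{L^{2^*}(\R^n)}=1$. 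Next I would invoke the Lagrange Multiplier Rule on the Hilbert space $\mathcal{X}^{1,2}(\Omega)$: the functional $G(u):=\int_\Omega|u|^{2^*}\,dx$ is of class $C^1$ with $\d G(w)[v]=2^*\int_\Omega w^{2^*-1}v\,dx$, and $\d G(w)\neq 0$ because $w\not\equiv 0$, so $\{G=1\}$ is a $C^1$-submanifold near $w$ and the constrained minimizer $w$ of the $C^1$ functional $\mathcal{Q}_\lambda$ satisfies, for some $\mu\in\R$,
$$\langle w,v\rangle_\rho-\lambda\int_\Omega w\,v\,dx=\mu\int_\Omega w^{2^*-1}v\,dx\qquad\text{for all }v\in\mathcal{X}^{1,2}(\Omega).$$
Testing this identity with $v=w$ and using $\|w\|_{L^{2^*}(\R^n)}=1$ gives $\mathcal{Q}_\lambda(w)=\mu$, hence $\mu=\mathcal{S}_n(\lambda)>0$ by hypothesis.

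Finally I would set $u:=t\,w$ with $t:=\mathcal{S}_n(\lambda)^{(n-2)/4}=\mathcal{S}_n(\lambda)^{1/(2^*-2)}>0$ (this being where the assumption $\mathcal{S}_n(\lambda)>0$ is genuinely used, since it makes $t$ a well-defined positive real). Plugging $u=t\,w$ into the Euler--Lagrange identity, using linearity of the left-hand side in $u$, homogeneity of degree $2^*-1$ of the right-hand side, and $\mu=\mathcal{S}_n(\lambda)=t^{2^*-2}$, one obtains
$$\langle u,v\rangle_\rho-\lambda\int_\Omega u\,v\,dx=\int_\Omega u^{2^*-1}v\,dx\qquad\text{for all }v\in\mathcal{X}^{1,2}(\Omega),$$
which is exactly~\eqref{eq:weakSol} for $p=1$. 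Since $u=t\,w\in\mathcal{X}^{1,2}_+(\Omega)$ and $\|u\|_{L^{2^*}(\R^n)}=t>0$ forces $|\{x\in\Omega:u(x)>0\}|>0$, the function $u$ is a solution to~\eqref{eq:mainPBlinear} in the sense of Definition~\ref{def:solPb}, which concludes the proof.

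The argument is essentially routine; the two points deserving care are the truncation step, which relies on the pointwise control of the Gagliardo seminorm of $|w|$, and the use of the Lagrange Multiplier Rule in the infinite-dimensional setting, where the non-degeneracy $\d G(w)\neq 0$ and the strict positivity $\mu=\mathcal{S}_n(\lambda)>0$ together are what upgrade a constrained critical point to an honest weak solution after rescaling.
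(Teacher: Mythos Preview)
Your proof is correct and follows essentially the same approach as the paper: replace $w$ by $|w|$, derive the Euler--Lagrange equation via the Lagrange Multiplier Rule, identify the multiplier as $\mathcal{S}_n(\lambda)$ by testing with $w$, and rescale by $\mathcal{S}_n(\lambda)^{(n-2)/4}$. You supply a bit more justification (the pointwise bound for $[|w|]_s$ and the non-degeneracy $\d G(w)\neq 0$), but the structure is identical.
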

 Before giving the proof of Lemma \ref{lem:solMinLinear}, we list in the
 next remark some properties of the number $\mathcal{S}(\lambda)$ which will
 be used in the sequel.
 \begin{remark}[Properties of $\mathcal{S}(\lambda)$] \label{rem:propSnlambda}
  We begin by observing that,
  by H\"older's ine\-quality, for every $u\in\mathcal{V}(\Omega)$
  we have
 \begin{align*}
  \mathcal{Q}_\lambda(u) & \geq \inf_{u\in\mathcal{V}(\Omega)}\rho^2(u)-\lambda\|u\|^2_{L^2(\R^n)}
   \geq \mathcal{S}_n-\lambda\|u\|^2_{L^2(\R^n)} \\
   & (\text{recall that $u\equiv 0$ a.e.\,in $\R^n\setminus\Omega$}) \\
   & \geq \mathcal{S}_n
  -\lambda |\Omega|^{4/n}.
 \end{align*}
 As a consequence, we obtain
 $$\mathcal{S}(\lambda) \geq  \mathcal{S}_n
  -\lambda |\Omega|^{4/n} > -\infty\quad\text{for every $\lambda > 0$}.$$
 In addition, from the definition of $\mathcal{S}(\lambda)$ we easily infer that
 \begin{itemize}
  \item[(i)] $\mathcal{S}(\lambda)\leq\mathcal{S}_n$ for every $\lambda > 0$;
  \vspace*{0.1cm}
  
  \item[(ii)] $\mathcal{S}(\lambda)
  \leq \mathcal{S}(\mu)$ for every $0 < \mu < \lambda$.
 \end{itemize}
 Finally, owing to the definition of $\lambda_1$ in \eqref{eq:deflambda1} 
 (and recalling that
 $\lambda_1$ is a\-chi\-e\-ved \emph{in the space $\mathcal{X}^{1,2}(\Omega)$},
 see Remark \ref{rem:problambdalambdas}), it is easy to see that
 $$\mathcal{S}(\lambda) \geq 0\,\,\Longleftrightarrow\,\,0<\lambda\leq \lambda_1.$$
 \end{remark}
 We are now ready to provide the proof of Lemma \ref{lem:solMinLinear}.
 \begin{proof}[Proof of Lemma \ref{lem:solMinLinear}]
   Let $w\in\mathcal{V}(\Omega)$ be a constrained minimizer for 
   the functional $\mathcal{Q}_\lambda$
   (whose existence is guaranteed by the assumptions), that is,
  $$\mathcal{Q}_\lambda(w) = \mathcal{S}(\lambda) > 0.$$
  Since $\mathcal{Q}_\lambda(|w|)\leq \mathcal{Q}_\lambda(w)$, 
  by possibly replacing $w$ with $|w|$ we may and do assume that $w\gneqq 0$ a.e.\,in $\Omega$;
  moreover, 
  by the Lagrange Multiplier Rule we have
  $$\d\mathcal{Q}_\lambda(w) \equiv \mu\,\d\big(u\mapsto \|u\|^2_{L^{2^*}(\R^n)}\big)(w)
  \quad\text{on $\mathcal{X}^{1,2}(\Omega)$},$$
  for some $\mu\in\R$.  This means that
  \begin{equation}  \label{eq:Lagrangew}
  \begin{split}
    & \int_{\Omega}\nabla w\cdot\nabla \phi
   + \iint_{\R^{2n}}\frac{(w(x)-w(y))(\phi(x)-\phi(y))}{|x-y|^{n+2s}} \\
   & \qquad\quad = \int_{\Omega}(\mu w^{2^*-1}+\lambda w)\phi\,d x
   \qquad\text{for every $\phi\in\mathcal{X}^{1,2}(\Omega)$}.
   \end{split}
  \end{equation}
  Now, if we choose $\phi = w$ in this last identity, we get
  \begin{align*}
   \mu = \mu\|w\|^{2^*}_{L^{2^*}(\R^n)} = \rho(w)^2-\lambda\|w\|^2_{L^2(\R^n)} = \mathcal{Q}_\lambda(w)
   = \mathcal{S}(\lambda) > 0.
  \end{align*}  
  As a consequence, 
  setting $u := \mathcal{S}(\lambda)^{(n-2)/4}w$, we see that 
  \medskip
  
  (i)\,\,$u\in\mathcal{X}_+^{1,2}(\Omega)\setminus\{0\}$, 
  since $w\gneqq 0$ in $\Omega$ and $\mathcal{S}(\lambda) > 0$;
  \vspace*{0.1cm}
  
  (ii)\,\,for every $\phi\in\mathcal{X}^{1,2}(\Omega)$, from \eqref{eq:Lagrangew} we get
  $$
  \int_{\Omega}\nabla u\cdot\nabla \phi
   + \iint_{\R^{2n}}\frac{(u(x)-u(y))(\phi(x)-\phi(y))}{|x-y|^{n+2s}}
   = \int_{\Omega}(u^{2^*-1}+\lambda u)\phi\,d x.
   $$
  Gathering (i)-(ii), we then conclude that $u$ is a solution to \eqref{eq:mainPBlinear}, as desired.
 \end{proof}
 On account of Lemma \ref{lem:solMinLinear}, an important information
 to study the solvability of problem \eqref{eq:mainPBlinear}
 is the \emph{sign} of the \emph{real number} $\mathcal{S}(\lambda)$.
 In this perspective, we prove the following result:
 \begin{lemma} \label{lem:SlambdaequalS}
  For every $0<\lambda\leq \lambda_{1,s}$, we have
  $$\mathcal{S}(\lambda) = \mathcal{S}_n > 0.$$
 \end{lemma}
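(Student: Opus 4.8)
The plan is to establish the two inequalities $\mathcal{S}_n(\lambda)\le\mathcal{S}_n$ and $\mathcal{S}_n(\lambda)\ge\mathcal{S}_n$ separately. The first one costs nothing and holds for \emph{every} $\lambda>0$: it is exactly property~(i) in Remark~\ref{rem:propSnlambda}, since $\mathcal{Q}_\lambda(u)=\rho(u)^2-\lambda\|u\|_{L^2(\R^n)}^2\le\rho(u)^2$ and hence, taking the infimum over $\mathcal{V}(\Omega)$ and recalling \eqref{eq:SnrecallUseful}, $\mathcal{S}_n(\lambda)\le\mathcal{S}_n$. The whole point is therefore the reverse bound, and this is where the restriction $\lambda\le\lambda_{1,s}$ is used: the fractional seminorm must ``absorb'' the negative zero-order term $-\lambda\|u\|_{L^2(\R^n)}^2$, leaving only the Dirichlet energy, which is then controlled from below by the Sobolev constant.

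Concretely, I would fix an arbitrary $u\in\mathcal{V}(\Omega)=\mathcal{X}^{1,2}(\Omega)\cap\mathcal{M}(\Omega)$ and invoke the \emph{fractional Poincar\'e inequality}
$$[u]_s^2\ge\lambda_{1,s}\,\|u\|_{L^2(\R^n)}^2\qquad\text{for every }u\in\mathcal{X}^{1,2}(\Omega),$$
which holds because $\rho(\cdot)\ge[\cdot]_s$ yields the inclusion $\mathcal{X}^{1,2}(\Omega)\subseteq\mathcal{D}_0^{s,2}(\Omega)$, so that the variational characterization of $\lambda_{1,s}$ on $\mathcal{D}_0^{s,2}(\Omega)$ recalled in Remark~\ref{rem:problambdalambdas} applies (after normalizing $u$ by $\|u\|_{L^2(\R^n)}$ when $u\not\equiv0$, the case $u\equiv0$ being trivial). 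Combining this with the hypothesis $\lambda\le\lambda_{1,s}$, for every $u\in\mathcal{V}(\Omega)$ we get
\begin{align*}
\mathcal{Q}_\lambda(u)&=\|\nabla u\|_{L^2(\R^n)}^2+[u]_s^2-\lambda\|u\|_{L^2(\R^n)}^2\\
&\ge\|\nabla u\|_{L^2(\R^n)}^2+(\lambda_{1,s}-\lambda)\|u\|_{L^2(\R^n)}^2\ge\|\nabla u\|_{L^2(\R^n)}^2.
\end{align*}
Since $\|u\|_{L^{2^*}(\R^n)}=1$ and $\mathcal{X}^{1,2}(\Omega)\subseteq\mathcal{D}_0^{1,2}(\Omega)$, the classical Sobolev inequality in the form \eqref{eq:SnrecallUseful} gives $\|\nabla u\|_{L^2(\R^n)}^2\ge\mathcal{S}_n$, hence $\mathcal{Q}_\lambda(u)\ge\mathcal{S}_n$ for all $u\in\mathcal{V}(\Omega)$. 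Passing to the infimum produces $\mathcal{S}_n(\lambda)\ge\mathcal{S}_n$, and together with the first step this forces $\mathcal{S}_n(\lambda)=\mathcal{S}_n$; the strict positivity $\mathcal{S}_n>0$ is then read off directly from the explicit value \eqref{eq:explicitSn}.

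I do not expect any genuine obstacle here: the argument is just the nesting of the fractional Poincar\'e inequality inside the mixed Rayleigh quotient, followed by the Sobolev inequality. The only point deserving a line of justification is that the inequality defining $\lambda_{1,s}$ on $C_0^\infty(\Omega)$ propagates to all of $\mathcal{X}^{1,2}(\Omega)$, and this is taken care of by the inclusion $\mathcal{X}^{1,2}(\Omega)\subseteq\mathcal{D}_0^{s,2}(\Omega)$ together with the completion/density facts already recorded in Remark~\ref{rem:problambdalambdas}. It is also worth noting, for the subsequent use in the existence theory, that this lemma combined with Lemma~\ref{lem:solMinLinear} will not by itself produce a solution, since $\mathcal{S}_n(\lambda)=\mathcal{S}_n$ is the borderline (non-strict) case of the compactness condition $\mathcal{S}_n(\lambda)<\mathcal{S}_n$; that is precisely why the regime $0<\lambda\le\lambda_{1,s}$ ends up being a \emph{non}-existence range in Theorem~\ref{thm:mainLinear}.
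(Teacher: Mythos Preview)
Your proof is correct and follows essentially the same route as the paper: bound $\mathcal{Q}_\lambda(u)$ from below by $\|\nabla u\|_{L^2(\R^n)}^2$ via the fractional Poincar\'e inequality $[u]_s^2\ge\lambda_{1,s}\|u\|_{L^2(\R^n)}^2$, then invoke the classical Sobolev constant. The only cosmetic difference is that the paper carries out the computation for $u\in C_0^\infty(\Omega)\cap\mathcal{M}(\Omega)$ and appeals to density at the end, whereas you work directly in $\mathcal{X}^{1,2}(\Omega)$ through the inclusion $\mathcal{X}^{1,2}(\Omega)\subseteq\mathcal{D}_0^{s,2}(\Omega)$; both are equivalent.
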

 \begin{proof}
 Let $\lambda\in(0,\lambda_{1,s})$ be given. By the definition of $\lambda_{1,s}$ in \eqref{eq:deflambda1s},
 for every $u\in C_0^\infty(\Omega)\cap \mathcal{M}$ we have
 \begin{align*}
  \mathcal{Q}_{\lambda}(u) & = \|\nabla u\|^2_{L^2(\R^n)}
  + \big([u]_s^2-\lambda\|u\|^2_{L^2(\R^n)}\big) \\
  & \geq \|\nabla u\|^2_{L^2(\R^n)}
  + (\lambda_{1,s}-\lambda)\|u\|^2_{L^2(\R^n)} \\
  & \geq \|\nabla u\|^2_{L^2(\R^n)}.
 \end{align*}
 As a consequence, 
 \begin{align*}
 \mathcal{S}(\lambda) & = \inf\big\{\mathcal{Q}_\lambda(u):\,u\in C_0^\infty(\Omega)
 \cap \mathcal{M}\big\} \\
 & \geq \inf\big\{\|\nabla u\|^2_{L^2(\R^n)}:\,u\in C_0^\infty(\Omega)
 \cap \mathcal{M}\big\} = \mathcal{S}_n,
 \end{align*}
 from which we deduce that $\mathcal{S}(\lambda) = \mathcal{S}_n$, as desired.
 \end{proof}
 By combining Remark \ref{rem:propSnlambda} with Lemma \ref{lem:SlambdaequalS}, we 
conclude that
 \begin{itemize}
  \item[(i)] $\mathcal{S}(\lambda) = \mathcal{S}_n$ for every $0<\lambda\leq \lambda_{1,s}$;
  \vspace*{0.1cm}
  
  \item[(ii)] $\mathcal{S}(\lambda)\geq 0$ for every $0<\lambda\leq \lambda_1$;
  \vspace*{0.1cm}
  
  \item[(iii)] $\mathcal{S}(\lambda) < 0$ for every $\lambda > \lambda_1$.
 \end{itemize}
 We now turn to prove that $\mathcal{S}(\cdot)$ is 
 \emph{continuous function of $\lambda \in (0,\infty)$}; this, together with the above (i)-to-(iii),
 ensures that
 $\mathcal{S}(\cdot)$ assumes \emph{all the values}
 between $0$ and the constant $\mathcal{S}_n$ when $\lambda$ ranges in the interval $(0,\lambda_1]$.
 \begin{lemma} \label{lem:contSnlambda}
 The function $\lambda\mapsto \mathcal{S}(\lambda)$ is continuous on $(0,\infty)$.
 \end{lemma}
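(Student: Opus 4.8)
The plan is to show continuity of $\lambda\mapsto\mathcal{S}_n(\lambda)$ by establishing a uniform Lipschitz-type control on the functional $\mathcal{Q}_\lambda$ across nearby parameters, which transfers to an equicontinuity estimate on the infimum. First I would fix $\lambda>0$ and let $\lambda_k\to\lambda$ with, say, $\lambda_k\in[\lambda/2,2\lambda]$ for $k$ large. For any $u\in\mathcal{V}(\Omega)$ we have the elementary identity
\begin{equation*}
\mathcal{Q}_{\lambda_k}(u)-\mathcal{Q}_\lambda(u)=(\lambda-\lambda_k)\|u\|^2_{L^2(\R^n)},
\end{equation*}
so controlling the difference of the two functionals at a common $u$ reduces to bounding $\|u\|^2_{L^2(\R^n)}$. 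Since $u$ vanishes outside the bounded set $\Omega$, H\"older's inequality gives $\|u\|^2_{L^2(\R^n)}\le|\Omega|^{4/n}\|u\|^2_{L^{2^*}(\R^n)}=|\Omega|^{4/n}$ for every $u\in\mathcal{V}(\Omega)$. Hence
\begin{equation*}
|\mathcal{Q}_{\lambda_k}(u)-\mathcal{Q}_\lambda(u)|\le|\lambda-\lambda_k|\,|\Omega|^{4/n}\qquad\text{for all }u\in\mathcal{V}(\Omega).
\end{equation*}

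Taking infima over $\mathcal{V}(\Omega)$ on both sides of the two one-sided inequalities $\mathcal{Q}_{\lambda_k}(u)\le\mathcal{Q}_\lambda(u)+|\lambda-\lambda_k||\Omega|^{4/n}$ and $\mathcal{Q}_\lambda(u)\le\mathcal{Q}_{\lambda_k}(u)+|\lambda-\lambda_k||\Omega|^{4/n}$ immediately yields
\begin{equation*}
|\mathcal{S}_n(\lambda_k)-\mathcal{S}_n(\lambda)|\le|\lambda-\lambda_k|\,|\Omega|^{4/n}.
\end{equation*}
Letting $k\to\infty$ gives $\mathcal{S}_n(\lambda_k)\to\mathcal{S}_n(\lambda)$, which proves continuity at the arbitrary point $\lambda$. (In fact this argument shows $\mathcal{S}_n(\cdot)$ is Lipschitz on $(0,\infty)$ with constant $|\Omega|^{4/n}$, and is the monotonicity recorded in Remark~\ref{rem:propSnlambda}-(ii) made quantitative.)

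There is essentially no serious obstacle here: the only point requiring a moment's care is to make sure the uniform bound $\|u\|_{L^2}\le|\Omega|^{2/n}$ is legitimate, which is exactly where boundedness of $\Omega$ (standing assumption (i) of Section~\ref{sec:critPb}) and the normalization $\|u\|_{L^{2^*}(\R^n)}=1$ defining $\mathcal{M}(\Omega)$ enter. One should also note that $\mathcal{V}(\Omega)$ is nonempty (it contains $w/\|w\|_{L^{2^*}}$ for any $0\ne w\in C_0^\infty(\Omega)$) so that the infimum is over a genuine set and the above manipulations with infima are valid; the lower bound $\mathcal{S}_n(\lambda)\ge\mathcal{S}_n-\lambda|\Omega|^{4/n}>-\infty$ from Remark~\ref{rem:propSnlambda} guarantees the infimum is finite, so all quantities involved are real numbers and the estimates are not vacuous.
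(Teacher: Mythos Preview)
Your proof is correct and rests on the same core observation as the paper's---namely, that for $u\in\mathcal{V}(\Omega)$ the quantity $\|u\|_{L^2(\R^n)}^2$ is bounded uniformly by a constant depending only on $|\Omega|$, via H\"older's inequality and the normalization $\|u\|_{L^{2^*}}=1$. Your packaging, however, is cleaner: you pass directly from the pointwise bound $|\mathcal{Q}_{\lambda_k}(u)-\mathcal{Q}_\lambda(u)|\le|\lambda_k-\lambda|\,|\Omega|^{4/n}$ to a Lipschitz estimate on the infimum, whereas the paper first invokes monotonicity of $\mathcal{S}_n(\cdot)$ to ensure one-sided limits exist, then proves left- and right-continuity separately using near-minimizers and an $\varepsilon/2$ argument. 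Your route avoids both the monotonicity appeal and the near-minimizer step, and yields the stronger conclusion that $\mathcal{S}_n(\cdot)$ is globally Lipschitz; the paper's approach, while slightly longer, makes the role of monotonicity explicit, which fits the narrative of the surrounding section.
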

 \begin{proof}
  By Remark \ref{rem:propSnlambda}, $\mathcal{S}(\cdot)$
  is \emph{nonincreasing} on the interval $(0,\infty)$.
  
  As a consequence,
  for every $\lambda > 0$ we have
  $$\exists\,\,\,\mathcal{S}(\lambda-) := \lim_{\mu\to \lambda^-}\mathcal{S}(\mu)\in\R\quad
  \text{and}\quad \exists\,\,\,\mathcal{S}(\lambda+) := \lim_{\mu\to \lambda^+}\mathcal{S}(\mu)\in\R.$$ 
  To prove the continuity of $\mathcal{S}(\cdot)$ on $(0,\infty)$, we 
  then pick any $\lambda_0 > 0$ and we show that
  $\mathcal{S}(\cdot)$ is 
  con\-ti\-nuo\-us \emph{both from the left and from the right} at $\lambda_0$, 
  that is,
  $$\mathcal{S}(\lambda_0-) = \mathcal{S}(\lambda_0+) = \mathcal{S}(\lambda_0).$$
  As regards the continuity from the left, we proceed as follows: first of all,
  given any $\e > 0$, 
  by definition of $\mathcal{S}(\lambda_0)$ there exists 
  $u = u_{\e,\lambda_0}\in \mathcal{V}(\Omega)$ such that
  $$
   \mathcal{S}(\lambda_0)\leq \mathcal{Q}_{\lambda_0}(u) < \mathcal{S}(\lambda_0)+\frac{\e}{2}.$$
  From this, using the monotonicity of $\mathcal{S}(\cdot)$ and exploiting
  H\"older's inequality 
  (recall that $u\in\mathcal{V}(\Omega)$ and $u\equiv 0$ a.e.\,in $\R^n\setminus\Omega$),
  for every $\lambda < \lambda_0$ we obtain
  \begin{align*}
   & 0 \leq \mathcal{S}(\lambda)-\mathcal{S}(\lambda_0)
   \leq \mathcal{Q}_{\lambda}(u)-\mathcal{S}(\lambda_0)\\
   & \qquad\quad = \big(\mathcal{Q}_{\lambda_0}(u)-\mathcal{S}(\lambda_0)\big)
   + (\lambda_0-\lambda)\|u\|^2_{L^2(\R^n)} \\
   & \qquad\quad < \frac{\e}{2}+(\lambda_0-\lambda)\|u\|^2_{L^2(\R^n)} \\
   & \qquad\quad \leq \frac{\e}{2}+(\lambda_0-\lambda)|\Omega|^{4/n}
  \end{align*}
  As a consequence, setting $\delta_\e := \e/(2|\Omega|^{4/n})$, we conclude that
  $$0 \leq \mathcal{S}(\lambda)-\mathcal{S}(\lambda_0) < 
  \e\qquad\text{for every $\lambda_0-\delta_\e
  < \lambda \leq \lambda_0$},$$
  and this proves that $\mathcal{S}(\cdot)$  is continuous from the left at $\lambda_0$.
  \vspace*{0.05cm}
  
  As regards the continuity from the right, we proceed {essentially as above}:
  first of all, given any $\e > 0$ and any $\lambda > \lambda_0$, we can find
  $u = u_{\e,\lambda}\in\mathcal{V}(\Omega)$ such that
  $$\mathcal{S}(\lambda)\leq \mathcal{Q}_{\lambda}(u) < \mathcal{S}(\lambda)+\frac{\e}{2}.$$
  From this, by the monotonicity of $\mathcal{S}(\cdot)$ and
  H\"older's inequality, we obtain 
  \begin{align*}
   & 0 \leq \mathcal{S}(\lambda_0)-\mathcal{S}(\lambda)
   \leq \mathcal{Q}_{\lambda_0}(u)-\mathcal{S}(\lambda) \\
   & \qquad\quad = \big(\mathcal{Q}_{\lambda}(u)-\mathcal{S}(\lambda)\big)
   + (\lambda-\lambda_0)\|u\|^2_{L^2(\R^n)} \\
   & \qquad\quad \leq \frac{\e}{2}+(\lambda-\lambda_0)|\Omega|^{4/n} < \e,
  \end{align*}
  provided that $\lambda_0 \leq \lambda < \lambda_0+\delta_\e$ (where $\delta_\e > 0$ is as above). 
    We then conclude that $\mathcal{S}(\cdot)$ 
  is also continuous from the right at $\lambda_0$,
  and the proof is complete.  
 \end{proof}
In the light of the results established so far, we can finally provide the
 \begin{proof}[Proof of Theorem \ref{thm:mainLinear}] 
  To begin with, we define
  $$\lambda^* := \sup\big\{\lambda > 0:\,\mathcal{S}(\mu) = 
  \mathcal{S}_n\,\,\text{for all $0<\mu\leq\lambda$}\big\}.$$
  On account of Lemma \ref{lem:SlambdaequalS}, we see that $\lambda_{1,s}\leq \lambda^* < \infty$;
  moreover, since we know from Lemma \ref{lem:contSnlambda} that $\mathcal{S}(\cdot)$
  is continuous, we have
  $\mathcal{S}(\lambda^*) = \mathcal{S}_n.$ 
  
  We also notice that, 
  since $\mathcal{S}(\cdot)\geq 0$ on $(0,\lambda_1]$ and $\mathcal{S}(\cdot) < 0$ on $(\lambda_1,\infty)$,
  again by the con\-ti\-nui\-ty of $\mathcal{S}(\cdot)$ we infer that $\mathcal{S}(\lambda_1) = 0$;
  as a consequence, recalling that $\mathcal{S}(\cdot)$ is nonincreasing on $(0,\infty)$,
  we conclude that
  $$\lambda^*\in [\lambda_{1,s},\lambda_1).$$
  We now turn to prove that the assertion of Theorem \ref{thm:mainLinear} 
  holds \emph{with this choice of $\lambda^*$}, that is,
  we show that the following assertions hold:
  \begin{itemize}
   \item[(i)]  pro\-blem \eqref{eq:mainPBlinear} possesses at least one
  solution if 
    $\lambda^*<\lambda<\lambda_1$;
    \vspace*{0.1cm}
    
    \item[(ii)] there do not exist solutions to problem \eqref{eq:mainPBlinear} 
    if $\lambda \geq \lambda_1$;
    \vspace*{0.1cm}
    
    \item[(iii)] for every fixed $0<\lambda\leq \lambda_{1,s}$,
    there do not exist solutions to problem \eqref{eq:mainPBlinear} 
    belonging to the closed ball
    $$\mathcal{B} = \{u\in L^{2^*}(\Omega):\,\|u\|\leq\mathcal{S}_n^{\,(n-2)/4}\}.$$
  \end{itemize}
  To this end, we treat the following three cases separately.
  \medskip

 \textsc{Case I: $\lambda^*<\lambda < \lambda_{1}$}. In this case, owing to the definition of $\lambda^*$ (and recalling that $\mathcal{S}(\cdot)$ is continuous), we have
 $0\leq \mathcal{S}(\lambda) < \mathcal{S}_n$; from this,
 by arguing exactly as in the proof of \cite[Lem.\,1.2]{BrNir}, 
 we see that $\mathcal{S}(\lambda)$ \emph{is achieved}, that is,
 there exists a nonzero function $w\in\mathcal{V}(\Omega)$ such that
 $$\mathcal{Q}_\lambda(w) = \mathcal{S}(\lambda).$$
 In particular, since $\lambda < \lambda_1$ (and $w\not\equiv 0$), we obtain
 \begin{align*}
  \mathcal{S}(\lambda) & = \|w\|^2_{L^2(\R^n)}\bigg[\frac{\rho(w)^2}{\|w\|^2_{L^2(\R^n)}}-\lambda\bigg]
  \geq \|w\|^2_{L^2(\R^n)}(\lambda_1-\lambda) > 0.
 \end{align*}  
 Now we have proved that $\mathcal{S}(\lambda)$ is strictly positive
 and is achieved, we are then entitled to apply Lemma \ref{lem:solMinLinear},
 ensuring that there exists a solution to \eqref{eq:mainPBlinear}.
 \medskip
 
 \textsc{Case II: $\lambda \geq \lambda_{1}$}. In this second case, since
 $\mathcal{S}(\cdot)$ is nonincreasing (and since we know that $\mathcal{S}(\lambda_1) = 0$), we infer that
 $$\mathcal{S}(\lambda)\leq 0 < \mathcal{S}_n.$$
 Owing to this fact, 
 we can argue once again as in \cite[Lem.\,1.2]{BrNir} to prove that $\mathcal{S}(\lambda)$ is achieved;
 however, since $\mathcal{S}(\lambda)\leq 0$, we \emph{are not entitled} to 
 apply Lemma \ref{lem:SlambdaequalS}
 to derive the existence of a solution to problem \eqref{eq:mainPBlinear}.
  Rather, we show that \emph{there cannot exist solutions} to
 \eqref{eq:mainPBlinear} by
  proceeding as in \cite[Remark~1.1]{BrNir}.
  
  Arguing by contradiction, let us assume that there exists (at least)
  one solution $u$ to problem \eqref{eq:mainPBlinear}
  in this case. On account of Remark \ref{rem:problambdalambdas}, we know that
  exists a nonzero function $\psi_0\in\mathcal{X}^{1,2}(\Omega)$ such that
  $\psi_0 > 0$ a.e.\,in $\Omega$ and
  \begin{align*}
  & \int_{\Omega}\nabla \psi_0\cdot\nabla v\,dx 
  + \iint_{\R^{2n}}\frac{(\psi_0(x)-\psi_0(y))(v(x)-v(y))}{|x-y|^{n+2s}}\,dx\,dy \\
  & \qquad\quad = \lambda_{1}\int_{\Omega}\psi_0v\,d x
  \qquad \forall\,\,v\in\mathcal{X}^{1,2}(\Omega),
  \end{align*}
  that is, $\psi_0$ is an \emph{eigefunction} for $\LL$ associated with $\lambda_1$.
  
  In particular, choosing $v = u\in\mathcal{X}^{1,2}_+(\Omega)$ in the above identity
  and
  recalling
  that $u$ is a solution to problem \eqref{eq:mainPBlinear}
  (according to Definition \ref{def:solPb}), 
  we obtain
  \begin{align*}
   \lambda_{1}\int_{\Omega}\psi_0u\,dx
   & = \int_{\Omega}\nabla \psi_0\cdot\nabla u\,dx 
  + \iint_{\R^{2n}}\frac{(\psi_0(x)-\psi_0(y))(u(x)-u(y))}{|x-y|^{n+2s}}\,dx\,dy \\
  & = \int_{\Omega}(u^{2^*-1}+\lambda u)\psi_0\,d x \\
  & (\text{since $u,\psi_0 > 0$ a.e.\,in $\Omega$, see Theorem \ref{thm:aprioriPropSol}}) \\
  & > \lambda\int_{\Omega}u\psi_0\,d x,
  \end{align*}
  but this is clearly in contradiction with the fact that $\lambda\geq \lambda_1$.
  \medskip
  
  \textsc{Case III: $0<\lambda\leq \lambda_{1,s}$}. 
  Arguing by contradiction, let us suppose that there exists a solution $u$ to
  problem \eqref{eq:mainPBlinear} such that $u\in\mathcal{B}$. 
  Then, setting
  $$v := u/\|u\|_{L^{2^*}(\Omega)},$$
  we have the following computation, based on the properties of $u$:
  \begin{align*}
   \mathcal{Q}_\lambda(v) & = \frac{1}{\|u\|^2_{L^{2^*}(\Omega)}}\mathcal{Q}_\lambda(u) 
    = \frac{1}{\|u\|^2_{L^{2^*}(\Omega)}}\big(\rho(u)^2-\lambda\|u\|^2_{L^2(\Omega)}\big) \\
    & = \frac{1}{\|u\|^2_{L^{2^*}(\Omega)}}
    \bigg\{\int_{\Omega}|\nabla u|^2\,d x+\iint_{\R^{2n}}
    \frac{|u(x)-u(y)|^2}{|x-y|^{n+2s}}\,dx\,dy-\lambda\int_\Omega u^2\,d x\bigg\} \\
    &  =
    \frac{1}{\|u\|^2_{L^{2^*}(\Omega)}}\int_\Omega u^{2^*}\,dx \qquad
    (\text{since $u$ solves \eqref{eq:mainPBlinear}, see \eqref{eq:weakSol} with $v = u$}) \\[0.1cm]
     & = \|u\|^{2^*-2}_{L^{2^*}(\Omega)} 
   \leq \mathcal{S}_n \qquad (\text{since $u\in\mathcal{B}$}).
    \end{align*}
  As a consequence, owing to Lemma \ref{lem:SlambdaequalS}, we obtain
  \begin{equation} \label{eq:SnachievedCaseI}
   \mathcal{Q}_\lambda(v) = \mathcal{S}_n,
  \end{equation}
  and this proves that
  $\mathcal{S}(\lambda) = \mathcal{S}_n$
  \emph{is achieved} by $v$.
  \vspace*{0.1cm}
  
  Now, since $v\in\mathcal{X}^{1,2}(\Omega)\subseteq
 \mathcal{D}_0^{1,2}(\Omega)$, from \eqref{eq:SnachievedCaseI} we get
 \begin{align*}
  \mathcal{S}_n & \leq \|\nabla v\|^2_{L^2(\R^n) }
  = \mathcal{Q}_\lambda(v) -
  \big([v]^2_s-\lambda\|v\|^2_{L^2(\R^n)}\big) \\
  & (\text{since $\lambda\leq \lambda_{1,s}$ and $v\in \mathcal{X}^{1,2}(\Omega)\subseteq
  \mathcal{D}^{s,2}_0(\Omega)$}) \\
  & \leq \mathcal{Q}_\lambda(v) = \mathcal{S}_n,
 \end{align*}
 and this shows that 
 the best Sobolev constant $\mathcal{S}_n$ is achieved by $v\in \mathcal{D}_0^{1,2}(\Omega)$.

 On the other hand, since $\Omega$ is \emph{bounded}, we know 
 from Remark \ref{rem:propSnrecall} that $\mathcal{S}_n$ 
 \emph{is never achieved} in $\mathcal{X}^{1,2}(\Omega)$, and so we have a contradiction.
 \medskip
 
 \noindent This ends the proof.
 \end{proof}
 
 \subsection{The superlinear case \texorpdfstring{$1<p < 2^*-1$}{1<p<2^*-1}}  \label{sec:superlinear}
Now we continue the study of the solvability of \eqref{eq:mainPBBN} in the case $1<p<2^{*}-1$,
and we address the proof of Theorem~\ref{MS:3232}. 
Compared to the {\em linear} case previously treated, 
the main difference is that we \emph{cannot} link the solvability
of problem \eqref{eq:mainPBBN} to the existence of constrained
\emph{minimizers} for the quadratic form $\mathcal{Q}_\lambda$
 in \eqref{eq:QformQlambda}
(since the rescaling argument in the proof of Lemma \ref{lem:solMinLinear} cannot be applied when
$p > 1$); as a consequence, we may try to prove the existence
of solutions to \eqref{eq:mainPBBN} by showing the existence
of unconstrained and nonnegative \emph{critical points} for the functional $\mathcal{F}_{\lambda,p}$ in \eqref{eq:defFlambda}.

However, this approach presents a drawback:
even if we are able to prove the existence of a nonzero critical point $v$ for $\mathcal{F}_{\lambda,p}$,
there is no reason for $u := |v|$ to be a critical point, and so we cannot be sure that 
there exist \emph{nonnegative} critical points.
For this reason, and following the approach in
\cite{BrNir}, 
we consider a slightly modified functional with respect to~\eqref{eq:defFlambda}, namely,
  \begin{equation} \label{eq:defJlambda}
 \mathcal{J}_{\lambda, p}(u) := \frac{1}{2}\rho(u)^2
 -\frac{1}{2^*}\int_{\Omega}(u_+)^{2^*}\,dx-\frac{\lambda}{p+1}\int_\Omega(u_+)^{p+1}\,d x
 \quad (u\in\mathcal{X}^{1,2}(\Omega)),
 \end{equation}
 \noindent where $u_+:= \max\{u,0\}$ denotes the positive part of the function $u$.
 
 Now, it is easy to see that any (nonzero) critical point
 of $\mathcal{J}_{\lambda, p}$ is a solution to \eqref{eq:mainPBBN}:
 indeed, if $u\not\equiv 0$ is a critical point of this functional, 
 by writing down the associated Euler-Lagrange equation
 we see that  $u$ is a weak solution 
 of
 $$\begin{cases}
  \LL u = (u_+)^{2^*-1}+\lambda(u_+)^{p-1} & \text{in $\Omega$} ,\\
  u \equiv 0 & \text{in $\R^n\setminus\Omega$}.
 \end{cases}$$
In particular, since we are assuming $\lambda > 0$, we deduce that $\LL u \geq 0$ in
 the weak sense in $\Omega$ (see, precisely, \eqref{eq:LLugeqzeroweak} in the proof
 of Theorem \ref{thm:aprioriPropSol}). 
 This, together with the fact that $u\equiv 0$ a.e.\,in $\R^n\setminus\Omega$,
 allows us to apply the Weak Maximum Principle in \cite[Theorem 1.2]{BDVV22a}, ensuring that
 $u\geq 0$ a.e.\,in $\R^n$.
 We then conclude that 
 $$u_+ \equiv u,$$ 
 and thus $u$ is a solution to problem \eqref{eq:mainPBBN}
 (according to Definition \ref{def:solPb}).
 \medskip
 
 In view of this fact, to prove Theorem \ref{MS:3232}
 it then suffices to prove the existence of a nonzero
 critical point for
 $\mathcal{J}_{\lambda,p}$.
 
 As in the purely local (see \cite{BrNir}) or purely nonlocal case (see \cite{BCSS}), 
 the main difficulty in the application of the Mountain Pass Theorem consists in proving the validity of a 
 Palais--Smale  condition at level $c\in \mathbb{R}$, usually denoted by $(PS)_c$.  
 In particular, we have to prove that the Mountain Pass
 critical level of $\mathcal{J}_{\lambda, p}$ lies below the threshold of application of the $(PS)_c$ condition. 
 In doing this, we will understand better the {\em strange behaviour in the linear case} and we will notice how 
 the local part of $\mathcal{L}$ affects the problem.
 \medskip
 
 To this end, the first step is to 
note that the functional $\mathcal{J}_{\lambda, p}$ has a nice Mountain Pass geometry,
 as described by the following lemma.
\begin{lemma}\label{prop:geometry}
There exist two positive constants $\alpha$, $\beta>0$ such that
\begin{itemize}
\item[i)] for any $u \in \mathcal{X}^{1,2}(\Omega)$ with $\rho(u)=\alpha$, it holds that $\mathcal{J}_{\lambda, p}(u)\geq \beta$;
\item[ii)] there exists a positive function $e\in \mathcal{X}^{1,2}(\Omega)$ such that 
$$\text{$\rho(e)>\alpha$ and $\mathcal{J}_{\lambda, p}(e)<\beta$}.$$
\end{itemize}
Moreover, for every positive function $u \in \mathcal{X}^{1,2}(\Omega)$, it holds that
\begin{equation}
\lim_{t\to 0^+}\mathcal{J}_{\lambda, p}(tu) =0.
\end{equation}
\begin{proof}
The proof is quite standard and exploits the validity of a Sobolev embedding for the space $\mathcal{X}^{1,2}(\Omega)$. See e.g. \cite[Proposition 3.1]{BCSS}.
\end{proof}
\end{lemma}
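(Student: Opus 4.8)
The plan is to run the by-now routine verification of the Mountain Pass geometry, using as the only analytic input the continuous embedding $\mathcal{X}^{1,2}(\Omega)\hookrightarrow L^{2^*}(\R^n)$ recorded in \eqref{eq:contEmbXL2star}. Since $\Omega$ is bounded and $1\le p+1<2^*$, H\"older's inequality upgrades this to $\mathcal{X}^{1,2}(\Omega)\hookrightarrow L^{p+1}(\Omega)$, so there is a constant $C_0=C_0(n,s,p,\Omega)>0$ with $\|u\|_{L^{2^*}(\R^n)}^{2^*}\le C_0\,\rho(u)^{2^*}$ and $\|u\|_{L^{p+1}(\Omega)}^{p+1}\le C_0\,\rho(u)^{p+1}$ for every $u\in\mathcal{X}^{1,2}(\Omega)$. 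These two estimates drive both items of the lemma.

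For part i), I would discard the sign of the critical and subcritical terms via the trivial bounds $(u_+)^{2^*}\le|u|^{2^*}$ and $(u_+)^{p+1}\le|u|^{p+1}$, which gives
$$\mathcal{J}_{\lambda,p}(u)\ge\frac{1}{2}\rho(u)^2-\frac{C_0}{2^*}\rho(u)^{2^*}-\frac{\lambda C_0}{p+1}\rho(u)^{p+1}=\rho(u)^2\Big(\frac{1}{2}-\frac{C_0}{2^*}\rho(u)^{2^*-2}-\frac{\lambda C_0}{p+1}\rho(u)^{p-1}\Big).$$
Here is the one place where the superlinearity $p>1$ genuinely enters: both exponents $2^*-2$ and $p-1$ are strictly positive, so the bracket tends to $1/2$ as $\rho(u)\to0^+$; hence one fixes $\alpha>0$ so small that the bracket is $\ge 1/4$ whenever $\rho(u)=\alpha$, and then $\mathcal{J}_{\lambda,p}(u)\ge\alpha^2/4=:\beta>0$ on that sphere. (When $p=1$ the factor $\rho(u)^{p-1}$ is a constant, which is precisely what spoils this lower bound for small $\lambda$ and accounts for the different behaviour in the linear case.)

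For part ii), I would fix once and for all a positive $u_0\in\mathcal{X}^{1,2}_+(\Omega)$ — for instance the first eigenfunction $\psi_0$ of $\mathcal{L}$, which by Remark~\ref{rem:problambdalambdas} is strictly positive a.e.\ in $\Omega$ — and examine $t\mapsto\mathcal{J}_{\lambda,p}(tu_0)$ for $t>0$. Since $u_0\ge0$ we have $(tu_0)_+=tu_0$, so
$$\mathcal{J}_{\lambda,p}(tu_0)=\frac{t^2}{2}\rho(u_0)^2-\frac{t^{2^*}}{2^*}\int_\Omega u_0^{2^*}\,dx-\frac{\lambda\,t^{p+1}}{p+1}\int_\Omega u_0^{p+1}\,dx.$$
Because $2^*>2$, $\lambda>0$ and $\int_\Omega u_0^{2^*}\,dx>0$, the right-hand side tends to $-\infty$ as $t\to+\infty$; choosing $t$ large enough that $t\,\rho(u_0)>\alpha$ and $\mathcal{J}_{\lambda,p}(tu_0)<0<\beta$, one sets $e:=tu_0$. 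Finally, the very same display, read as $t\to0^+$, yields at once $\lim_{t\to0^+}\mathcal{J}_{\lambda,p}(tu)=0$ for every positive $u\in\mathcal{X}^{1,2}(\Omega)$, since all of $2$, $2^*$ and $p+1$ are positive.

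I do not expect any real obstacle here: the only points deserving a word of care are the passage from the $L^{2^*}$-embedding to the $L^{p+1}$-embedding (which uses $|\Omega|<\infty$ and $p+1<2^*$) and the role of $p>1$ in part i). This is exactly the scheme of \cite[Proposition~3.1]{BCSS}, to which the argument reduces after replacing the fractional seminorm there by the mixed norm $\rho(\cdot)$.
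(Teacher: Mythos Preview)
Your proof is correct and follows exactly the approach the paper intends: the paper's own proof is merely a pointer to \cite[Proposition~3.1]{BCSS}, and your write-up is precisely the standard verification alluded to there, with the fractional norm replaced by $\rho(\cdot)$ and the Sobolev embedding \eqref{eq:contEmbXL2star} (plus H\"older on the bounded domain) supplying the needed control of the $L^{2^*}$- and $L^{p+1}$-norms. One cosmetic slip in your parenthetical aside: when $p=1$ the bracket $\tfrac12-\tfrac{\lambda C_0}{2}-\tfrac{C_0}{2^*}\rho(u)^{2^*-2}$ is spoiled for \emph{large} $\lambda$, not small; this does not affect the argument since the lemma is stated under the standing assumption $p>1$.
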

 
 We now move to show that $\mathcal{J}_{\lambda, p}$ satisfies a local Palais-Smale condition at a level $c\in\mathbb{R}$ related to the best Sobolev constant $\mathcal{S}_n$.
 
 \begin{lemma}\label{prop:PalaisSmale}
 The functional $\mathcal{J}_{\lambda, p}$ satisfies the $(PS)_c$ for every $c < \dfrac{1}{n}(\mathcal{S}_n)^{n/2}$.
 \begin{proof}
 Let $\{u_m\}$ be a $(PS)_c$ sequence for the functional $\mathcal{J}_{\lambda,p}$ in $\mathcal{X}^{1,2}(\Omega)$, i.e.
 \begin{itemize}
 \item[i)] $\mathcal{J}_{\lambda,p}(u_m) \to c$, as $m\to +\infty$;
 \item[ii)]$\mathcal{J}'_{\lambda,p}(u_m) \to 0$, as $m\to +\infty$.
 \end{itemize}
 By standard arguments, this ensures that $\{u_m\}$ is bounded in the Hilbert space $\mathcal{X}^{1,2}(\Omega)$. In particular, this implies the existence of a function $u_{\infty}\in \mathcal{X}^{1,2}(\Omega)$ such that, up to subsequences,
 \begin{equation}
 \begin{aligned}
 \int_{\Omega}\langle &\nabla u_m, \nabla \varphi \rangle \, dx + \iint_{\mathbb{R}^{2n}}\dfrac{(u_m(x)-u_m(y))(\varphi(x)-\varphi(y))}{|x-y|^{n+2s}}\, dx dy \\
 &\to  \int_{\Omega}\langle \nabla u_{\infty}, \nabla \varphi \rangle \, dx + \iint_{\mathbb{R}^{2n}}\dfrac{(u_{\infty}(x)-u_{\infty}(y))(\varphi(x)-\varphi(y))}{|x-y|^{n+2s}}\, dx dy,
 \end{aligned}
 \end{equation}
 \noindent for every  $\varphi \in \mathcal{X}^{1,2}(\Omega)$.
Moreover, $u_m$ converges to $u_{\infty}$ (as $m\to +\infty$) weakly in $L^{2^*}(\Omega)$, almost everywhere and strongly in $L^{r}(\Omega)$ for every $r \in [1,2^*)$ (and hence in norm).
Putting everything together, we get that $u_{\infty}$ is a weak solution to \eqref{eq:mainPBBN} (with $p>1$).

Now, by exploiting \cite[Theorem 1]{BrLi}, we find that
\begin{equation}
\begin{aligned}
\mathcal{J}_{\lambda,p}(u_m) &= \dfrac{1}{2}\int_{\Omega}|\nabla u_m|^2 + \dfrac{1}{2}\iint_{\mathbb{R}^{2n}}\dfrac{|u_m(x)-u_m(y)|^2}{|x-y|^{n+2s}}\, dx dy \\
&\quad - \dfrac{1}{2^*}\int_{\Omega}((u_m)_+)^{2^*}\, dx - \dfrac{\lambda}{p+1}\int_{\Omega}((u_m)_+)^{p+1}\, dx\\
&= \mathcal{J}_{\lambda,p}(u_{\infty}) + \dfrac{1}{2}\rho(u_m -u_{\infty})^2 - \dfrac{1}{2^*}\int_{\Omega}|(u_m)_{+} -(u_{\infty})_{+} |^{2^{*}}\, dx + o(1).
\end{aligned}
\end{equation}
Arguing as in the proof of Claim 3 of \cite[Proposition 3.2]{BCSS}, we can further prove that
\begin{equation}\label{eq:Claim3}
\begin{aligned}
\rho(u_m - u_{\infty})^2 &= \int_{\Omega}|(u_m)_+ - (u_{\infty})_+|^{2^*}\, dx + o(1)\\
&\leq \int_{\Omega}|u_m - u_{\infty}|^{2^*}\, dx + o(1).
\end{aligned}
\end{equation}
We are now ready to finish the proof of Lemma~\ref{prop:PalaisSmale}. By \eqref{eq:Claim3}, we have that
\begin{equation}
\dfrac{1}{2}\rho(u_m - u_{\infty})^2 - \dfrac{1}{2^*}\int_{\Omega}|(u_m)_+ - (u_{\infty})_+|^{2^*}\, dx = \dfrac{1}{n}\rho(u_m - u_{\infty})^2 + o(1),
 \end{equation}
 \noindent and therefore
 \begin{equation}
 \mathcal{J}_{\lambda,p}(u_{\infty}) + \dfrac{1}{n}\rho(u_m - u_{\infty})^2 = \mathcal{J}_{\lambda,p}(u_m) + o(1) = c_2 + o(1),
 \end{equation}
 \noindent as $m\to +\infty$.
 On the other hand, thanks to the boundedness of $\{u_m\}$, possibily passing to a subsequence, we have that there exists $L>0$ such that
 \begin{equation}
 \rho(u_m - u_{\infty})^2 \to L, \quad \textrm{ as } m \to +\infty,
 \end{equation}
 \noindent and that there exists $\tilde{L}\geq L$ such that
 \begin{equation}
 \int_{\Omega}|u_m - u_{\infty}|^{2^*}\, dx \to \tilde{L}.
 \end{equation}
 Recalling that 
 $$\mathcal{S}_{n,s}(\Omega) = \inf \dfrac{\rho(u)^2}{\|u\|^{2}_{L^{2^*}(\Omega)}} = \mathcal{S}_n,$$
 \noindent we find that 
 \begin{equation}
 L \geq \mathcal{S}_{n,s}(\Omega) \tilde{L}^{2/2^*} = \mathcal{S}_n \tilde{L}^{2/2^*},
 \end{equation}
 \noindent which implies that either $L=0$ or $L \geq \mathcal{S}_n \tilde{L}^{n/2}$. Arguing as in \cite[Proposition 3.2]{BCSS} we can show that the only possible case is $L=0$, and this shows that the sequence $\{u_m\}$ strongly converges in $\mathcal{X}^{1,2}(\Omega)$.
 \end{proof}
 \end{lemma}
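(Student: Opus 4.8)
The plan is to run the standard Brezis--Nirenberg compactness argument, checking at each step that the mixed local/nonlocal structure does not interfere. First I would take a $(PS)_c$ sequence $\{u_m\}\subseteq\mathcal{X}^{1,2}(\Omega)$, so that $\mathcal{J}_{\lambda,p}(u_m)\to c$ and $\mathcal{J}'_{\lambda,p}(u_m)\to 0$ in the dual space, and prove that $\{u_m\}$ is bounded: combining $\mathcal{J}_{\lambda,p}(u_m)-\tfrac1{p+1}\langle\mathcal{J}'_{\lambda,p}(u_m),u_m\rangle$ and using that $1<p<2^*-1$ (so that both coefficients $\tfrac12-\tfrac1{p+1}$ and $\tfrac1{p+1}-\tfrac1{2^*}$ are positive), one gets $\big(\tfrac12-\tfrac1{p+1}\big)\rho(u_m)^2\le C\big(1+\rho(u_m)\big)$, whence $\sup_m\rho(u_m)<\infty$.

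Since $\mathcal{X}^{1,2}(\Omega)$ is a Hilbert space and $\Omega$ is bounded, up to a subsequence $u_m\rightharpoonup u_\infty$ weakly in $\mathcal{X}^{1,2}(\Omega)$, $u_m\to u_\infty$ strongly in $L^r(\Omega)$ for every $r\in[1,2^*)$ and $u_m\to u_\infty$ a.e.\ in $\R^n$. I would then pass to the limit in $\langle\mathcal{J}'_{\lambda,p}(u_m),\varphi\rangle\to 0$: the quadratic Dirichlet-plus-Gagliardo part converges by weak convergence, the critical term $(u_m)_+^{2^*-1}$ converges weakly in $L^{2^*/(2^*-1)}$ (being bounded there and a.e.\ convergent), and the subcritical term $(u_m)_+^{p}$ converges strongly in $L^{(p+1)/p}$; hence $\mathcal{J}'_{\lambda,p}(u_\infty)=0$. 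Testing this identity against $u_\infty$ and rearranging yields $\mathcal{J}_{\lambda,p}(u_\infty)=\tfrac1n\int_\Omega(u_\infty)_+^{2^*}\,dx+\lambda\,\tfrac{p-1}{2(p+1)}\int_\Omega(u_\infty)_+^{p+1}\,dx\ge 0$, where positivity uses $\lambda>0$ and $p>1$.

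Next I would decompose. From the Hilbert-space identity $\rho(u_m)^2=\rho(u_\infty)^2+\rho(u_m-u_\infty)^2+o(1)$ and the Brezis--Lieb lemma applied to $\{(u_m)_+\}$ one obtains $\mathcal{J}_{\lambda,p}(u_m)=\mathcal{J}_{\lambda,p}(u_\infty)+\tfrac12\rho(u_m-u_\infty)^2-\tfrac1{2^*}\int_\Omega\big|(u_m)_+-(u_\infty)_+\big|^{2^*}\,dx+o(1)$; subtracting $\langle\mathcal{J}'_{\lambda,p}(u_\infty),u_\infty\rangle=0$ from $\langle\mathcal{J}'_{\lambda,p}(u_m),u_m\rangle=o(1)$ and using the same splittings (the lower-order $\lambda$-term disappears by strong convergence, also thanks to $|a_+-b_+|\le|a-b|$) gives $\rho(u_m-u_\infty)^2=\int_\Omega\big|(u_m)_+-(u_\infty)_+\big|^{2^*}\,dx+o(1)$. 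Setting $L:=\lim_m\rho(u_m-u_\infty)^2$ along a further subsequence, these two relations combine to $c=\mathcal{J}_{\lambda,p}(u_\infty)+\big(\tfrac12-\tfrac1{2^*}\big)L=\mathcal{J}_{\lambda,p}(u_\infty)+\tfrac1n L$.

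Finally I would invoke the mixed Sobolev inequality $\mathcal{S}_n\|v\|_{L^{2^*}(\R^n)}^2\le\rho(v)^2$ on $\mathcal{X}^{1,2}(\Omega)$ (valid by Theorem~\ref{thm:SnsSn} together with Remark~\ref{rem:XembeddedLtwostar}), applied to $v=u_m-u_\infty$, together with $\int_\Omega\big|(u_m)_+-(u_\infty)_+\big|^{2^*}\,dx\le\|u_m-u_\infty\|_{L^{2^*}(\R^n)}^{2^*}$; passing to the limit yields $L\ge\mathcal{S}_n L^{2/2^*}$, so either $L=0$ or $L\ge\mathcal{S}_n^{\,n/2}$. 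In the second alternative, $c=\mathcal{J}_{\lambda,p}(u_\infty)+\tfrac1n L\ge\tfrac1n\mathcal{S}_n^{\,n/2}$, contradicting the hypothesis $c<\tfrac1n\mathcal{S}_n^{\,n/2}$; therefore $L=0$, i.e.\ $u_m\to u_\infty$ strongly in $\mathcal{X}^{1,2}(\Omega)$, which is exactly the $(PS)_c$ condition. I expect the main obstacle to be the Brezis--Lieb splitting for the truncated nonlinearity $(u_m)_+^{2^*}$ and the careful verification that $u_\infty$ is a genuine critical point of $\mathcal{J}_{\lambda,p}$, since only then can the sign of $\mathcal{J}_{\lambda,p}(u_\infty)$ be controlled; note that the lack of scaling invariance of $\LL$ does not enter this argument, all the mixed-order difficulty having been absorbed beforehand into the equality $\mathcal{S}_{n,s}(\Omega)=\mathcal{S}_n$.
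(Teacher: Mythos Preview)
Your proposal is correct and follows essentially the same route as the paper: bounded $(PS)_c$ sequence, weak limit $u_\infty$ which is a critical point, Brezis--Lieb splitting of $\mathcal{J}_{\lambda,p}(u_m)$, the identity $\rho(u_m-u_\infty)^2=\int_\Omega|(u_m)_+-(u_\infty)_+|^{2^*}+o(1)$, and then the dichotomy $L=0$ or $L\ge\mathcal{S}_n^{\,n/2}$ via the mixed Sobolev inequality. Your write-up is in fact more self-contained than the paper's, since you spell out explicitly both the boundedness argument (via $\mathcal{J}_{\lambda,p}(u_m)-\tfrac1{p+1}\langle\mathcal{J}'_{\lambda,p}(u_m),u_m\rangle$) and the key inequality $\mathcal{J}_{\lambda,p}(u_\infty)\ge 0$ needed to exclude the case $L\ge\mathcal{S}_n^{\,n/2}$, whereas the paper outsources these steps to \cite[Proposition~3.2]{BCSS}.
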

 
The last ingredient needed to establish Theorem~\ref{MS:3232}
consists in showing the existence of a path whose energy is below the critical threshold $\mathcal{S}_n/n$. 
To this aim, we first introduce an auxiliary function reminiscent of the one used in \cite{BrNir}: let $\phi_0 \in C^{\infty}(\mathbb{R})$ be a nonincreasing cut--off function given by
\begin{equation}
\phi_0 (t) := \left\{ \begin{array}{rl}
1, &{\mbox{ if }} t\in \left[0, \tfrac{1}{4}\right],\\[0.15cm]
0, &{\mbox{ if }} t\geq \frac{1}{2}.
\end{array}\right.
\end{equation}   
Then, we define the function $\phi_r(x):\mathbb{R}^n \to \mathbb{R}$ as 
 \begin{equation}
 \phi_r(x):= \phi_0 \left(\dfrac{|x|}{r}\right), 
 \end{equation}
 \noindent for a given $r>0$ such that $\overline{B_r(0)}\subset \Omega$.
  Finally, for all~$\varepsilon>0$, let
 \begin{equation*}
 U_{\varepsilon}(x):= \dfrac{\varepsilon^{(n-2)/2}}{\left(|x|^2+\varepsilon^2 \right)^{(n-2)/2}} \qquad
 \text{and}\qquad \eta_{\varepsilon}(x) := \dfrac{\phi_r(x)\, U_{\varepsilon}(x)}{\|\phi_r\, U_{\varepsilon}\|_{L^{2^*}(\Omega)}}.
 \end{equation*}
 In this framework, we have:
 \begin{lemma}\label{lem:Path}
 Let $n\geq 3$ and $p\in(1,2^*-1)$. Moreover, let
 $\kappa_{s,n},\,\beta_{p,n}$ be as
 in \eqref{eq:defkappabeta}.
 Then, the following assertions hold.
 \begin{itemize}
  \item[(i)] If $\kappa_{s,n} > \beta_{p,n}$, then there exists $\varepsilon>0$ small enough such that 
 \begin{equation*}
 \sup_{t \geq 0}\mathcal{J}_{\lambda,p}(t\eta_{\varepsilon}) < \dfrac{1}{n}(\mathcal{S}_n)^{n/2}\quad
 \text{for all $\lambda > 0$}.
 \end{equation*}
 \item[(ii)] If, instead, $\kappa_{s,n} \leq 
 \beta_{p,n}$, then there exist $\e > 0$ and $\lambda_0 > 0$ such that
  $$\sup_{t \geq 0}\mathcal{J}_{\lambda,p}(t\eta_{\varepsilon}) < \dfrac{1}{n}(\mathcal{S}_n)^{n/2}\quad
 \text{for all $\lambda \geq \lambda_0$}.$$
\end{itemize}
 \end{lemma}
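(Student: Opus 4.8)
The plan is to reduce the claim to a one-variable maximization along the ray $t\mapsto t\eta_\varepsilon$ and to feed in the asymptotics of the relevant quantities as $\varepsilon\to0^+$. Since $\eta_\varepsilon\geq 0$ and $\|\eta_\varepsilon\|_{L^{2^*}(\Omega)}=1$, one has $(\eta_\varepsilon)_+\equiv\eta_\varepsilon$, so that for every $t\geq0$
\begin{equation*}
\mathcal{J}_{\lambda,p}(t\eta_\varepsilon)=g_\varepsilon(t):=\frac{t^2}{2}A_\varepsilon-\frac{t^{2^*}}{2^*}-\frac{\lambda}{p+1}\,t^{p+1}B_\varepsilon,\qquad A_\varepsilon:=\rho(\eta_\varepsilon)^2,\quad B_\varepsilon:=\|\eta_\varepsilon\|_{L^{p+1}(\Omega)}^{p+1}.
\end{equation*}
The whole argument then rests on three estimates, valid for all $\varepsilon$ in a fixed interval $(0,\varepsilon_0]$: \emph{(a)} $\|\nabla\eta_\varepsilon\|_{L^2(\R^n)}^2=\mathcal{S}_n+O(\varepsilon^{n-2})$, which is the classical Aubin--Talenti/Brezis--Nirenberg expansion for the truncated Sobolev bubble (and gives in particular $A_\varepsilon\geq\mathcal{S}_n$, by the Sobolev inequality); \emph{(b)} $[\eta_\varepsilon]_s^2\leq C\,\varepsilon^{\kappa_{s,n}}$, up to a harmless logarithmic factor in the single borderline case $2-2s=n-2$; \emph{(c)} $B_\varepsilon\geq c_1\,\varepsilon^{\beta_{p,n}}$. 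Combining \emph{(a)}--\emph{(b)} and recalling $\kappa_{s,n}\leq n-2$ yields $\mathcal{S}_n\leq A_\varepsilon\leq\mathcal{S}_n+C\varepsilon^{\kappa_{s,n}}$.

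Estimate \emph{(c)} is elementary: writing $U_\varepsilon(x)=\varepsilon^{-(n-2)/2}U_1(x/\varepsilon)$ with $U_1(y)=(1+|y|^2)^{-(n-2)/2}$ and using $\overline{B_{r/2}}\subset\Omega$, the change of variables $x=\varepsilon y$ gives $\int_{B_{r/2}}U_\varepsilon^{p+1}=\varepsilon^{\beta_{p,n}}\int_{|y|<r/(2\varepsilon)}(1+|y|^2)^{-(n-2)(p+1)/2}\,dy\geq c_1\varepsilon^{\beta_{p,n}}$ (the scaling weight being precisely $\beta_{p,n}=n-(p+1)(n-2)/2>0$), while $\|\phi_r U_\varepsilon\|_{L^{2^*}(\R^n)}^{2^*}\leq\|U_1\|_{L^{2^*}(\R^n)}^{2^*}$. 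The genuinely new ingredient is \emph{(b)}. Starting from the scaling identity $[\phi_r U_\varepsilon]_s^2=\varepsilon^{2-2s}\iint_{\R^{2n}}\frac{|\phi_r(\varepsilon\xi)U_1(\xi)-\phi_r(\varepsilon\zeta)U_1(\zeta)|^2}{|\xi-\zeta|^{n+2s}}\,d\xi\,d\zeta$, I would estimate the remaining integral by a dyadic splitting of the domain: the ``core'' $|x|,|y|\lesssim\varepsilon$ and the core--tail interaction each contribute $O(\varepsilon^{2-2s})$ (using $|\nabla U_\varepsilon|\lesssim\varepsilon^{(n-2)/2}|x|^{1-n}$ away from the origin and $U_\varepsilon\lesssim\varepsilon^{(n-2)/2}|x|^{2-n}$ outside $B_\varepsilon$), whereas summing the intermediate annuli $\varepsilon\lesssim|x|\sim|y|\lesssim r$ produces a term comparable to $\varepsilon^{n-2}\int_\varepsilon^r\rho^{3-n-2s}\,d\rho$, which is $O(\varepsilon^{2-2s})$ if $n+2s>4$ and $O(\varepsilon^{n-2})$ if $n+2s<4$. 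This accounts exactly for $\min\{2-2s,n-2\}=\kappa_{s,n}$ (the case $n+2s=4$, i.e. $n=3$, $s=\tfrac12$, giving the logarithm); since $\|\phi_r U_\varepsilon\|_{L^{2^*}(\R^n)}^{2^*}$ is bounded below, the same bound passes to $\eta_\varepsilon$. I expect this seminorm estimate to be the main obstacle, the rest being standard.

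With \emph{(a)}--\emph{(c)} at hand the conclusion is a short calculus exercise. The function $g_\varepsilon$ is positive for small $t>0$ and tends to $-\infty$, hence $\sup_{t\geq0}g_\varepsilon=g_\varepsilon(t_\varepsilon)$ for some $t_\varepsilon>0$ with $g_\varepsilon'(t_\varepsilon)=0$. Fix once and for all $\delta:=\tfrac{1}{2\sqrt n}\,\mathcal{S}_n^{(n-2)/4}$ and distinguish two cases. If $t_\varepsilon\leq\delta$, then dropping the two negative terms $g_\varepsilon(t_\varepsilon)\leq\tfrac{\delta^2}{2}A_\varepsilon\leq\delta^2\mathcal{S}_n=\tfrac{1}{4n}\mathcal{S}_n^{n/2}<\tfrac1n\mathcal{S}_n^{n/2}$ (for $\varepsilon_0$ small enough that $A_\varepsilon\leq 2\mathcal{S}_n$). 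If $t_\varepsilon>\delta$, then using $\max_{t\geq0}\big(\tfrac{t^2}{2}A_\varepsilon-\tfrac{t^{2^*}}{2^*}\big)=\tfrac1n A_\varepsilon^{n/2}$ together with $t_\varepsilon^{p+1}>\delta^{p+1}$, $B_\varepsilon>0$ and estimate \emph{(c)},
\begin{equation*}
g_\varepsilon(t_\varepsilon)\leq\frac1n A_\varepsilon^{n/2}-\frac{\lambda\,\delta^{p+1}}{p+1}B_\varepsilon\leq\frac1n\mathcal{S}_n^{n/2}+C'\varepsilon^{\kappa_{s,n}}-c_2\,\lambda\,\varepsilon^{\beta_{p,n}},
\end{equation*}
with $C',c_2>0$ independent of $\varepsilon\in(0,\varepsilon_0]$ and of $\lambda$. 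In case (i), where $\kappa_{s,n}>\beta_{p,n}$, given $\lambda>0$ one picks $\varepsilon=\varepsilon(\lambda)\leq\varepsilon_0$ so small that $C'\varepsilon^{\kappa_{s,n}}<c_2\lambda\varepsilon^{\beta_{p,n}}$ (possible since $\varepsilon^{\kappa_{s,n}-\beta_{p,n}}\to0$), and both cases give $g_\varepsilon(t_\varepsilon)<\tfrac1n\mathcal{S}_n^{n/2}$. In case (ii), where $\kappa_{s,n}\leq\beta_{p,n}$, one instead freezes $\varepsilon=\varepsilon_0$ and sets $\lambda_0:=\tfrac{2C'}{c_2}\,\varepsilon_0^{\,\kappa_{s,n}-\beta_{p,n}}$; then for every $\lambda\geq\lambda_0$ one has $C'\varepsilon_0^{\kappa_{s,n}}-c_2\lambda\varepsilon_0^{\beta_{p,n}}\leq-C'\varepsilon_0^{\kappa_{s,n}}<0$, and again $\sup_{t\geq0}\mathcal{J}_{\lambda,p}(t\eta_{\varepsilon_0})<\tfrac1n\mathcal{S}_n^{n/2}$. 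This establishes both (i) and (ii).
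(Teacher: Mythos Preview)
Your proposal is correct and follows the same overall architecture as the paper: reduce to the one-variable function $g_\varepsilon$, feed in the three asymptotics for $\|\nabla\eta_\varepsilon\|_{L^2}^2$, $[\eta_\varepsilon]_s^2$ and $\|\eta_\varepsilon\|_{L^{p+1}}^{p+1}$, and conclude by elementary calculus. The differences are in implementation. For the seminorm estimate~(b), the paper avoids your dyadic splitting (and the borderline logarithm) by the decomposition of \cite{SerValTAMS}: the inner block $B_\delta\times B_\delta$ gives exactly $\varepsilon^{2-2s}$ times a piece of $[U_1]_s^2<\infty$ by the scaling $x=\varepsilon\xi$, $y=\varepsilon\zeta$, while the three remaining blocks are $O(\varepsilon^{n-2})$; this yields $O(\varepsilon^{2-2s})+O(\varepsilon^{n-2})=O(\varepsilon^{\kappa_{s,n}})$ cleanly. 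For the final step, the paper locates the maximizer via $g'(t_{\varepsilon,\lambda})=0$: in case~(i) it bounds $t_{\varepsilon,\lambda}$ from below by some $\mu_\lambda>0$, and in case~(ii) it shows $t_{\varepsilon,\lambda}\to 0$ as $\lambda\to\infty$ so that $g(t_{\varepsilon,\lambda})\to 0$. Your fixed-threshold dichotomy $t_\varepsilon\le\delta$ versus $t_\varepsilon>\delta$ is a tidier and more unified alternative, since the same inequality $g_\varepsilon(t_\varepsilon)\le\tfrac1n\mathcal{S}_n^{n/2}+C'\varepsilon^{\kappa_{s,n}}-c_2\lambda\varepsilon^{\beta_{p,n}}$ handles both regimes. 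Finally, your explicit choice $\varepsilon=\varepsilon(\lambda)$ in case~(i) is exactly what the paper's argument delivers as well (the constant in its last displayed chain depends on $\mu_\lambda$), and this $\forall\lambda\,\exists\varepsilon$ version is what is actually needed for Theorem~\ref{MS:3232}.
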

\begin{proof}
 We closely follow the argument in \cite[Lemma 3.4]{BCSS}, which in turn is a mo\-di\-fi\-ca\-tion 
 of the original approach by Brezis and Nirenberg \cite{BrNir}. 
 
 To begin with we observe that, by definition, we have
 \begin{equation} \label{eq:Jtoestimate}
 \begin{split}
\mathcal{J}_{\lambda,p}(t \eta_{\varepsilon}) & = \dfrac{t^2}{2}\int_{\Omega}|\nabla \eta_{\varepsilon}|^2\, dx +   
   \dfrac{t^2}{2}\iint_{\mathbb{R}^{2n}}\dfrac{|\eta_{\varepsilon}(x)-\eta_{\varepsilon}(y)|^2}{|x-y|^{n+2s}}\, dx dy \\
   & \qquad\qquad - \dfrac{t^{2^*}}{2^{*}}-\lambda \dfrac{t^{p+1}}{p+1}\int_{\Omega}\eta_\e(x)^{p+1}\, dx;
\end{split}
\end{equation}
we then turn to estimate the three integrals in the right-hand side of \eqref{eq:Jtoestimate}.
As regards the $L^{p+1}$-norm of $\eta_\e$, if $\e > 0$ is small enough we have
\begin{equation}\label{eq:etalower}
\begin{aligned}
\int_{\mathbb{R}^n}\eta_{\varepsilon}(x)^{p+1}\, dx &= C \int_{B_{r}(0)}U_{\varepsilon}(x)^{p+1}\, dx \\
& = C \, \varepsilon^{-(p+1)\tfrac{n-2}{2}}\int_{0}^{r}\dfrac{\tau^{n-1}}{\left(\tfrac{\tau^2}{\varepsilon^2}+1 \right)^{(p+1)\tfrac{n-2}{2}}}\, d\tau\\
&= C \, \varepsilon^{n-(p+1)\tfrac{n-2}{2}}\int_{0}^{r/\varepsilon}\dfrac{t^{n-1}}{\left(t^2+1 \right)^{(p+1)\tfrac{n-2}{2}}}\, dt \\
&\geq C \, \varepsilon^{n-(p+1)\tfrac{n-2}{2}}\int_{0}^{1}\dfrac{t^{n-1}}{\left(t^2+1 \right)^{(p+1)\tfrac{n-2}{2}}}\, dt \\
&= C\varepsilon^{n-(p+1)\tfrac{n-2}{2}},
\end{aligned}
\end{equation}
where the constant $C > 0$ is adjusted step by step, but it depends only on $n$.
As re\-gards the $L^2$-norm of $\nabla\eta_\e$, instead, we know 
from \cite[Lemma 1.1]{BrNir} that
$$\int_{\Omega}|\nabla \eta_{\varepsilon}|^2\, dx = \mathcal{S}_{n} + O(\varepsilon^{n-2})\qquad
\text{as $\e\to 0^+$}.$$
We are then left to estimate the Gagliardo seminorm of $\eta_\e$.
To begin with, arguing as in \cite[Proposition 21]{SerValTAMS}
(and recalling that $B_r(0)\subset\Omega$), we define
\begin{align*}
& \mathbb{D}:= \left\{(x,y)\in \mathbb{R}^{2n}: x \in B_{r/4}(0), y\in B^c_{r/4}(0) \, 
\textrm{ and } |x-y|>r/8 \right\}, \\
& \mathbb{E}:= \left\{(x,y)\in \mathbb{R}^{2n}: x \in B_{r/4}(0), y\in B^c_{r/4}(0) \,
 \textrm{ and } |x-y|\leq r/8\right\}.
\end{align*}
With this notation at hand, we get that
\begin{equation}
\begin{aligned}
& \iint_{\mathbb{R}^{2n}}\dfrac{|\eta_{\varepsilon}(x)-\eta_{\varepsilon}(y)|^2}{|x-y|^{n+2s}}\, dx dy \\
& \qquad = \dfrac{1}{\|\phi_r U_{\varepsilon}\|^2_{L^{2^{\ast}}(\Omega)}} \iint_{B_{r/4}(0)\times B_{r/4}(0)}\dfrac{|U_{\varepsilon}(x)-U_{\varepsilon}(y)|^2}{|x-y|^{n+2s}}\, dx dy \\
&\qquad\quad+ 2  \iint_{\mathbb{D}}\dfrac{|\eta_{\varepsilon}(x)-\eta_{\varepsilon}(y)|^2}{|x-y|^{n+2s}}\, dx dy\\
&\qquad\quad+ 2  \iint_{\mathbb{E}}\dfrac{|\eta_{\varepsilon}(x)-\eta_{\varepsilon}(y)|^2}{|x-y|^{n+2s}}\, dx dy\\
&\qquad\quad+  \iint_{B^c_{r/4}(0)\times B^c_{r/4}(0)}\dfrac{|\eta_{\varepsilon}(x)-\eta_{\varepsilon}(y)|^2}{|x-y|^{n+2s}}\, dx dy.
\end{aligned}
\end{equation}
Now, by repeating the proof of \cite[Proposition 21]{SerValTAMS}, one realizes that the last three integrals 
in the above identity 
behave like $O(\varepsilon^{n-2})$ as $\e\to 0^+$.
Moreover, we claim that the first integral behaves like $O(\varepsilon^{2-2s})$ as $\e\to 0^+$. 

Indeed, it is proved in \cite{BrNir, BN2} that
\begin{equation*}
\|\phi_r U_{\varepsilon}\|^{2^{\ast}}_{L^{2^{\ast}}(\Omega)} = C_n + O(\varepsilon^{n}) \quad \textrm{as } \e\to 0^+,
\end{equation*}
\noindent where $C_n > 0$ is a suitable constant only depending on $n$; on the other hand,
by performing the change of variables $x = \varepsilon \xi$ and~$y= \varepsilon \zeta$, we get
\begin{equation*}
\begin{split}
& \iint_{B_{r/4}(0)\times B_{r/4}(0)}\dfrac{|U_{\varepsilon}(x)-U_{\varepsilon}(y)|^2}{|x-y|^{n+2s}}\, dx dy 
\leq \int_{\R^{2n}}\dfrac{|U_{\varepsilon}(x)-U_{\varepsilon}(y)|^2}{|x-y|^{n+2s}}\, dx dy \\
& \qquad = 
\int_{\R^{2n}}\frac{\e^{n-2}}{\e^{(2n-4)+n+2s}}\cdot\dfrac{|U_{1}(\xi)-U_{1}(\zeta)|^2}
{|\xi-\zeta|^{n+2s}}\cdot\e^{2n}\, dx dy \\[0.1cm]
& \qquad = \e^{2-2s}\cdot[U_1]^2_{s} = 
O(\varepsilon^{2-2s}),
\end{split}
\end{equation*}
where we have used Remark \ref{rem:Snasymptotic} (notice that $U_1 = c^{-1}\,\mathcal{U}$).
\vspace*{0.1cm}

 Gathering all these information, from \eqref{eq:Jtoestimate} we then obtain
\begin{equation} \label{eq:tociteIntro}
\begin{aligned}
\mathcal{J}_{\lambda,p}(t \eta_{\varepsilon})&\leq \dfrac{t^2}{2}(\mathcal{S}_n+O(\varepsilon^{n-2})+O(\varepsilon^{2-2s})) - \dfrac{t^{2^*}}{2^{*}} - C \lambda \dfrac{t^{p+1}}{p+1}\varepsilon^{n-(p+1)\tfrac{n-2}{2}}\\
&= \dfrac{t^2}{2}\big(\mathcal{S}_n+ O(\varepsilon^{\kappa_{s,n}})\big) 
 - \dfrac{t^{2^*}}{2^{*}} - C \lambda \dfrac{t^{p+1}}{p+1}\varepsilon^{\beta_{p,n}} \\
 & \leq 
 \dfrac{t^2}{2}\big(\mathcal{S}_n+ C\varepsilon^{\kappa_{s,n}}\big) 
 - \dfrac{t^{2^*}}{2^{*}} - C \lambda \dfrac{t^{p+1}}{p+1}\varepsilon^{\beta_{p,n}}=: g(t),
\end{aligned}
\end{equation}
provided that $\e > 0$ is sufficiently small and for a suitable constant $C > 0$.
\vspace*{0.1cm}

To proceed further, following the proof of \cite[Lemma 3.4]{BCSS}, we turn to study
the maximum/minimum points of $g$. To this end we first observe that, since
$$\text{$g(t)\to -\infty$ as $t\to\infty$},$$ 
there exists some point $t_{\varepsilon,\lambda}\geq 0$ such that 
$$\sup_{t \geq 0}g(t) = g(t_{\varepsilon,\lambda}).$$
If $t_{\e,\lambda} = 0$, we have $g(t)\leq 0$ for all $t\geq 0$, and the lemma 
is trivially established as a consequence 
of \eqref{eq:tociteIntro}. 
If, instead, $t_{\e,\lambda} > 0$, since $g\in C^1((0,\infty))$ we get
\begin{equation*}
0= g'(t_{\varepsilon,\lambda}) = t_{\varepsilon,\lambda}
(\mathcal{S}_n + C \varepsilon^{\kappa_{s,n}})- t_{\varepsilon,\lambda}^{2^{*} -1}-
C\lambda t_{\varepsilon,\lambda}^{p}\varepsilon^{\beta_{p,n}}.
\end{equation*}
In particular, recalling that $t_{\varepsilon,\lambda}>0$, we can rewrite the above identity as
\begin{equation} \label{eq:gprimezerosemp}
 \mathcal{S}_n + C \varepsilon^{\kappa_{s,n}} = 
t_{\varepsilon,\lambda}^{2^{*} -2}+C\lambda t_{\varepsilon,\lambda}^{p-1}\varepsilon^{\beta_{p,n}}
\end{equation}
from which we easily derive that
$$t_{\varepsilon,\lambda}< ( \mathcal{S}_n + C \varepsilon^{\kappa_{s,n}})^{1/(2^{*}-2)}.$$
We now distinguish two cases, according to the assumptions.
\medskip

\textsc{Case (i): $\kappa_{s,n} > \beta_{p,n}$}. In this case we first observe that, since 
$t_{\e,\lambda} > 0$, from identity 
\eqref{eq:gprimezerosemp} we easily infer the existence
of some $\mu_\lambda > 0$ such that
$$t_{\e,\lambda}\geq \mu_\lambda > 0\quad\text{provided that $\e$ is small enough}.$$
This, together with the fact that since the map
$$t \mapsto \dfrac{t^2}{2}( \mathcal{S}_n +C \varepsilon^{\kappa_{s,n}}) - \dfrac{t^{2^*}}{2^*}$$
\noindent is increasing in the closed interval
 $[0,\mathcal{S}_n + C \varepsilon^{\kappa_{s,n}})^{1/(2^{*}-2)}]$, implies
\begin{equation*}
\begin{aligned}
& \sup_{t\geq 0}g(t) =g(t_{\varepsilon,\lambda}) \\[0.1cm]
& \qquad < \dfrac{(\mathcal{S}_n + C \varepsilon^{\kappa_{s,n}})^{1+ 2/(2^{*}-2)}}{2} -  
 \dfrac{(\mathcal{S}_n + C \varepsilon^{\kappa_{s,n}})^{2^*/(2^{*}-2)}}{2^*} - 
 C \varepsilon^{\beta_{p,n}}\\[0.1cm]
& \qquad = \dfrac{1}{n}(\mathcal{S}_n +C \varepsilon^{\kappa_{s,n}})^{n/2}-
C\varepsilon^{\beta_{p,n}}
\\[0.1cm]
& \qquad \leq \dfrac{1}{n}(\mathcal{S}_n)^{n/2}+ C \varepsilon^{\kappa_{s,n}} - C\varepsilon^{\beta_{p,n}}
 <\dfrac{1}{n}(\mathcal{S}_n)^{n/2},
\end{aligned}
\end{equation*}
\noindent provided that $\e > 0$ is sufficiently small. 
We explicitly stress that, in the last estimate, we have exploited in a crucial way
the assumption $\kappa_{s,n} > \beta_{p,n}$.
\medskip

\textsc{Case (ii):  $\kappa_{s,n} \leq \beta_{p,n}$.} In this second case, we 
begin by claiming that
\begin{equation} \label{eq:claimtlambdazero}
 \lim_{\lambda\to\infty}t_{\e,\lambda} = 0.
\end{equation}
Indeed, suppose by contradiction that $\ell: = \limsup_{\lambda\to\infty}t_{\e,\lambda} > 0$: 
then, by possibly choosing a sequence $\{\lambda_k\}_{k}$ diverging to $\infty$, 
from \eqref{eq:gprimezerosemp} we get
\begin{align*}
 \mathcal{S}_n + C \varepsilon^{\kappa_{s,n}} = 
t_{\varepsilon,\lambda_k}^{2^{*} -2}+C\lambda_k t_{\varepsilon,\lambda_k}^{p-1}
\varepsilon^{\beta_{p,n}}
\to\infty
\end{align*}
but this is clearly absurd. Now we have established 
\eqref{eq:claimtlambdazero}, we can easily complete the proof of the lemma:
indeed, by combining \eqref{eq:tociteIntro} with \eqref{eq:claimtlambdazero}, we have
\begin{align*}
 0 \leq \sup_{t\geq 0}\mathcal{J}_{\lambda,p}(t \eta_{\varepsilon})
 \leq g(t_{\e,\lambda})
 \leq \dfrac{t_{\e,\lambda}^2}{2}\big(\mathcal{S}_n+ C\varepsilon^{\kappa_{s,n}}\big) 
 - \dfrac{t_{\e,\lambda}^{2^*}}{2^{*}}\to 0\quad\text{as $\lambda\to \infty$},
\end{align*}
and this readily implies the existence of $\lambda_0 = \lambda_0(p,s,n,\e)> 0$ such that
$$\sup_{t\geq 0}\mathcal{J}_{\lambda,p}(t \eta_{\varepsilon}) < \frac{1}{n}(\mathcal{S}_n)^{2/n}\quad\text{for all
$\lambda\geq \lambda_0$},$$
provided that $\e > 0$ is small enough but \emph{fixed}. This ends the proof.
\end{proof}

\begin{proof}[Proof of Theorem~\ref{MS:3232}] The desired result in Theorem~\ref{MS:3232}
now follows from the Mountain Pass Theorem, thanks to Lemmata~\ref{prop:geometry}, \ref{prop:PalaisSmale} and~\ref{lem:Path}.
\end{proof}

\end{document}